\newtheorem{theorem}{Theorem}[section]
\newtheorem{lemma}[theorem]{Lemma}
\newtheorem{proof}{Proof}[section]
\newenvironment{smallpmatrix}
    {\left(
    \begin{smallmatrix}}
    {\end{smallmatrix}
    \right)
    }
\newcommand{\R}{\mathbb{R}}
\newcommand{\Diag}{\textnormal{Diag}\,}
\newcommand{\x}{\mathbf x}
\newcommand{\y}{\mathbf y}
\newcommand{\s}{\mathbf s}
\newcommand{\bd}{\mathbf d}
\newcommand{\bb}{\mathbf b}
\newcommand{\z}{\mathbf z}
\newcommand{\w}{\mathbf w}
\newcommand{\p}{\mathbf p}
\newcommand{\argmin}{\mathop{\rm argmin}}
\icmltitlerunning{Fast algorithms for SPG-LS}
\begin{document}

\twocolumn[
\icmltitle{Fast Algorithms for Stackelberg Prediction Game with Least Squares Loss}



\begin{icmlauthorlist}
\icmlauthor{Jiali Wang}{sds}
\icmlauthor{He Chen}{math}
\icmlauthor{Rujun Jiang}{sds}
\icmlauthor{Xudong Li}{sds}
\icmlauthor{Zihao Li}{math}
\end{icmlauthorlist}
\icmlaffiliation{sds}{School of Data Science, Fudan University, China}
\icmlaffiliation{math}{School of Mathematical Sciences, Fudan University, China}

\icmlcorrespondingauthor{Rujun Jiang}{rjjiang@fudan.edu.cn}

\icmlkeywords{Machine Learning, ICML}

\vskip 0.3in
]



\printAffiliationsAndNotice{}  

\begin{abstract}
The Stackelberg prediction game (SPG) has been extensively used to model the interactions between the learner and data provider in the training process of various machine learning algorithms. Particularly, SPGs played prominent roles in cybersecurity applications, such as intrusion detection, banking fraud detection, spam filtering, and malware detection. Often formulated as NP-hard bi-level optimization problems, it is generally computationally intractable to find global solutions to SPGs.
As an interesting progress in this area, a special class of SPGs with the least squares loss (SPG-LS) have recently been shown polynomially solvable by a bisection method.
However, in each iteration of this method, a semidefinite program (SDP) needs to be solved. The resulted high computational costs prevent its applications for large-scale problems.
In contrast, we propose a novel approach that reformulates a SPG-LS as a single SDP of a similar form and the same dimension  as those solved in the bisection method. Our SDP reformulation is, evidenced by our numerical experiments, orders of magnitude faster than the existing bisection method. We further show that the obtained SDP can be reduced to a second order cone program (SOCP). This allows us to provide real-time response to large-scale SPG-LS problems. Numerical results on both synthetic and real world datasets indicate that the proposed SOCP method is up to 20,000+ times faster than the state of the art.
\end{abstract}

\section{Introduction}
In the big data era, machine learning (ML) algorithms have been extensively used to extract useful information from data and found numerous applications in our daily life. In certain areas, such as cybersecurity, the nature of applications requires high robustness of ML algorithms against adversarial attacks. A typical scenario would be the training data that the ML algorithms or the learner relied on is deliberately altered by a malicious adversary.
In this case, the key assumption for the success of ML algorithms, i.e., the stationarity of data or equivalently the independent and identically distributed (i.i.d.) assumption, fails to hold.
To alleviate this difficulty, researchers have proposed various game theoretic approaches \cite{bruckner2011stackelberg,tong2018adversarial,vorobeychik2018adversarial,bishop2020optimal} to model the strategic interactions between the learner and the attacker -- in our case, the adversarial data provider.

In practice, there are also many applications that the learner and the data providers are not entirely antagonistic, where the data providers often manipulate the data only for their own interests. Introduced by \citet{bruckner2011stackelberg}, the SPG is used to model the interactions between the learner and the data provider as a two-players non-zero-sum sequential game for such cases. In the SPG, the learner is regarded as the leader who makes the first move to commit to a predictive model without knowing the strategy of the data provider (or the follower). Then, the data provider, based on the available information of the learner's predictive model, selects his costs-minimizing strategy to modify the data against the learner. Under the rationality assumption of both the learner and data provider, \citet{bruckner2011stackelberg} introduced the notion of Stackelberg equilibrium as the optimal strategy of the SPG and  proposed to find it via solving a corresponding bi-level optimization problem. Particularly, the bi-level optimization problem minimizes the prediction loss from the learner's perspective under the constraint that the data has been optimally modified from the data provider's perspective.
Since then, SPGs have received a lot of attention in the literature
\cite{shokri2012protecting,zhou2016modeling,wahab2016stackelberg,papernot2018towards,naveiro2019gradient,zhou2019survey}. Unfortunately,  bi-level optimization problems are generally NP-hard \cite{jeroslow1985polynomial} and their optimal solutions are intrinsically difficult to obtain, which severely limit the applicability of SPGs in real world use cases.

Recently,  \citet{bishop2020optimal} made the first step to globally solve a special subclass of SPGs.  Specifically, they restricted their interests on SPGs with least squares loss (SPG-LS) (i.e., all the loss functions for  the learner and data providers are  the least squares).
They further reformulated the SPG-LS into a quadratically constrained quadratic fractional program that can be solved via a bisection method.
In each iteration of their bisection method, a nonconvex quadratically constrained quadratic program (QCQP) needs to be \textit{exactly}\ solved.
Fortunately, by using the celebrated S-lemma \cite{yakubovich1971s,polik2007survey,xia2016s}, the optimal solutions to the nonconvex QCQPs can be obtained via solving their semidefinite programming (SDP) relaxations \cite{vandenberghe1996semidefinite}.
However, the number of bisection searches is often of several tens in practice.
This, together with heavy computational costs of solving each SDP, makes the bisection method far less attractive especially for large-scale problems.
Moreover, the requirement for \textit{exactly} solving each SDP is too strong for large-scale problems, even armed with powerful academic and commercial solvers.
Theoretically speaking, given the accumulation of these inaccuracy, the convergence of the bisection method with inexact SDPs' solutions remains unknown.
More importantly, this accumulated inexactness may finally result unstable algorithmic performances, which prevents its applications in the area of security.

In this paper, we aim to resolve the above mentioned scalability and stability issues of the bisection method for the SPG-LS.
For this purpose, we start by re-examining the quadratic fractional program (QFP) considered in \citet{bishop2020optimal}.
By using the S-lemma in a slightly different way, we show that the QFP can be directly reformulated into an SDP of almost the same problem size as the ones in the bisection method.
Furthermore, we prove that there always exists an optimal solution for our SDP and an optimal solution to the SPG-LS can be recovered from the optimal solution of our SDP.
It thus implies that the bisection steps are unnecessary, i.e., to solve the SPG-LS, one only needs to solve a single SDP.
This novel reformulation outperforms the bisection method by a significant margin as the latter involves solving a series of  SDPs with similar problem sizes.
Surprisingly, we can take a step  further in accelerating our method. By carefully investigating the intrinsic structures of the proposed SDP, we show our single SDP reformulation can be further reduced into a second order cone program (SOCP) \cite{alizadeh2003second}, which can be solved much more efficiently than SDPs in general.
More specifically, we apply two congruence transformation for the linear matrix inequality (LMI) in our SDP.
 The second congruence in fact explores
a simultaneous diagonalizabiliy of submatrices for the three matrices in the LMI after the first congruence transformation. Then by using a generalized Schur complement, we demonstrate that our SDP can be further reformulated as an SOCP with a much smaller size.
The main cost in our reformulation is a spectral decomposition for the data matrix that is cheap even for large instances.
Moreover, solving our SOCP reformulation is even cheaper than one spectral decomposition.
Hence  our SOCP method is much faster than our single SDP method.



We summarise our contributions as follows:
\begin{itemize}
        \item We derive a single SDP reformulation for the SPG-LS, while the state of the art needs to solve dozens of SDPs with similar problems sizes.
        \item We further derive an SOCP reformulation  with a much smaller dimension than our single SDP.
        \item We propose two efficient ways to recover an optimal solution for the SPG-LS  either from a rank-1 decomposition of the dual solution of our single SDP or by solving a linear system with an additional equation.
        \item We show that our methods significantly improve the state of the art by numerical experiments on both synthetic and real data sets.
\end{itemize}

\section{Preliminaries}
In this section, we formalize the SPG-LS problem by adapting the same setting as in \citet{bishop2020optimal}. A brief review of Bishop et al.'s bisection method will also be provided.

Similar as in \citet{bishop2020optimal}, we assume that the learner has access to a sample $S = \left\{(\x_i, y_i, z_i) \right\}_{i=1}^m$ with each $\x_i \in\R^n$ been the input example, $y_i$ and $z_i$ been the output labels of interests to the learner and the data provider, respectively. The samples are assumed to be realizations of $(\x, y, z)$ following some fixed but unknown distribution ${\cal D}$. The learner then aims to train a linear predictor $\w\in\R^n$ based on $S$, i.e., to predict correctly label $y$ when supplied with $\x$. In the SPG-LS, being aware of the learner's predictor $\w$, the goal of the adversarial data provider is to fool the learner to predict the label $z$ by modifying the input data $\x$ to $\hat \x$ while maintaining low manipulation costs. Here, we follow \citet{bishop2020optimal} to model the modifying costs from $\x$ to $\hat \x$ as $\gamma \|\x - \hat\x\|^2$ with some positive parameter $\gamma$.

To find the Stackelberg equilibrium of the above SPG-LS, we formulate in the following the corresponding bi-level optimization problem. Given the disclosed predictor $\w\in\R^n$ and the training set $S$, the data provider described above aims to solve the following optimization problems:
$$
\x_i^* = \argmin_{\mathbf{\hat{x}_i}}\ \|\mathbf{w}^T\mathbf{\hat{x}_i}-z_i\|^2 + \gamma\|\mathbf{x_i}-\mathbf{\hat{x}_i}\|_2^2\quad i\in[m].
$$
Then, one can obtain a Stackelberg equilibrium through the classic backward induction procedure \cite{bruckner2011stackelberg}.
With the modified data $\{\x_i^*\}_{i=1}^m$, the learner has to solve the following optimization problem:
\[
\w^* \in \argmin_{\w } \|\w^T \x_i^* - y_i\|^2.
\]
The predictor $\w^*$ and the optimal modified data sets $\{\x_i^*\}_{i=1}^m$ of the data provider are by definition a Stackelberg equilibrium \cite{bruckner2011stackelberg}.
To obtain this, we arrive at the following bi-level optimization problem:
\begin{equation}
\begin{array}{ccl}
& \underset{\mathbf{w}}{\min}\ &\left\|X^{*} \mathbf{w}- \y\right\|^{2} \\
&\ \text{s.t.}\ &X^{*}=\underset{\hat{X}}{\argmin}\  \|\hat{X}{\mathbf{w}}-\mathbf{z}\|^2 +\gamma\|\hat{X}-X\|_{F}^{2},
\end{array}
\label{pb:matrixform}
\end{equation}
where the $i$-th row of ${\bf X} \in\R^{m\times n}$ is just the input example $\x_i$ and the $i$-th entries of $\y, \z\in \R^m$ are labels $y_i$ and $z_i$, respectively.

In their work, \citet{bishop2020optimal} considered the following reformulation. They started by replacing the lower differentiable and strongly convex optimization problem by its optimality condition, i.e.,
$$
X^{*}=\left(\mathbf{z} \mathbf{w}^{T}+\gamma X\right)\left(\mathbf{w} \mathbf{w}^{T}+\gamma I\right)^{-1}.
$$
Then, the Sherman-Morrison formula~\cite{sherman1950adjustment} further implies
$$
X^*\mathbf{w} = \frac{\frac{1}{\gamma} \mathbf{z} \mathbf{w}^{T} \mathbf{w}+X \mathbf{w}}{1+\frac{1}{\gamma} \mathbf{w}^{T} \mathbf{w}}.
$$
Substituting the above formula to problem \eqref{pb:matrixform}, we obtain the following fractional program:
\begin{equation}
\label{pb:ori}
\min_\w~~ \frac{\left\|\frac{1}{\gamma}\z\w^T\w+X\w-\y
-\frac{1}{\gamma}\w^T\w\y\right\|^2}{(1+\frac{1}{\gamma}\w^T\w)^2}.
\end{equation}

\subsection{A Bisection Method for Solving \eqref{pb:ori}}
Here, we briefly review the bisection method developed in \citet{bishop2020optimal} for solving the fractional program \eqref{pb:ori}.

By introduce an artificial variable $\alpha$ and an additional constraint $\alpha = \w^T \w$, \citet{bishop2020optimal} first reformulated \eqref{pb:ori} as the following QFP:
\begin{equation}
\label{pb:frac}
        \begin{array}{ll}
         \min\limits_{\w ,\alpha}& \displaystyle \frac{\|\frac{\alpha}{\gamma}\z+X\w -\y -\frac{\alpha}{\gamma}\y \|^2}{(1+\frac{\alpha}{\gamma})^2} \\
        {\rm s.t.}&\alpha=\w ^T\w.
        \end{array}
\end{equation}
Then, they adopted a bisection search for
$q^*$ such that $F(q^*)=0$, where $F$ is the optimal value function of the following Dinkelbach problem associated with \eqref{pb:frac}, for all $q\in \R$,
\begin{equation}
\label{pb:bisub}
\begin{aligned}
 F(q):=\min\limits_{\w ,\alpha}\quad& \|\frac{\alpha}{\gamma}\z+X\w -\y -\frac{\alpha}{\gamma}\y \|^2-q(1+\frac{\alpha}{\gamma})^2\\
 \rm s.t \quad &\alpha=\w ^T\w.
\end{aligned}
\end{equation}
The correctness of their algorithm is due to the following well known result for fractional programming.
\begin{lemma}
[Theorem 1 of \citet{dinkelbach1967nonlinear}]
\label{lem:fq}
Assume that for all $q\in\R$, problem \eqref{pb:bisub} has nonempty optimal solution set. Then, the equation $F(q)=0$ has a unique solution. Furthermore, $(\w^*,\alpha^*)$ is a solution to the QFP \eqref{pb:frac} if and only if $\w^{*T} \w^* = \alpha^*$ and $F(q^*)=0$ where $q^* = \|\frac{\alpha^*}{\gamma}\z+X\w^* -\y -\frac{\alpha^*}{\gamma}\y \|^2/(1 + \alpha^*/\gamma)^2$.
\end{lemma}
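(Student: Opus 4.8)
The plan is to follow Dinkelbach's parametric argument, exploiting the single structural feature that makes it work here: on the feasible set the denominator is uniformly bounded away from zero. Write $g(\w,\alpha) = \|\frac{\alpha}{\gamma}\z + X\w - \y - \frac{\alpha}{\gamma}\y\|^2$ for the numerator and $h(\alpha) = (1+\frac{\alpha}{\gamma})^2$ for the denominator, so that $F(q) = \min\{g(\w,\alpha) - q\,h(\alpha) : \alpha = \w^T\w\}$. Since $\gamma > 0$ and every feasible point satisfies $\alpha = \w^T\w \ge 0$, we have $h(\alpha) \ge 1 > 0$ and $g(\w,\alpha) \ge 0$ throughout the feasible set; these two inequalities are what drive the entire argument.

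First I would establish the basic properties of $F$. Because $q \mapsto g(\w,\alpha) - q\,h(\alpha)$ is affine for each fixed feasible $(\w,\alpha)$, the function $F$ is a pointwise infimum of affine functions and hence concave; the standing hypothesis that \eqref{pb:bisub} attains its minimum for every $q$ makes $F$ real-valued, and a finite concave function on $\R$ is continuous. Evaluating the objective defining $F(q_2)$ at a minimizer of $F(q_1)$ for $q_1 < q_2$ and using $h > 0$ yields $F(q_2) = F(q_1) - (q_2 - q_1)\,h(\alpha_1) < F(q_1)$, so $F$ is strictly decreasing.

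Next I would locate the root. As $q \to -\infty$, the bound $g - q\,h \ge |q|\,h \ge |q|$ (valid because $g \ge 0$ and $h \ge 1$) forces $F(q) \to +\infty$; as $q \to +\infty$, substituting the fixed feasible point $(\w,\alpha) = (\mathbf 0, 0)$ gives $F(q) \le g(\mathbf 0,0) - q \to -\infty$. Continuity and the intermediate value theorem then produce a root, and strict monotonicity makes it unique, which proves the first assertion.

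Finally, for the equivalence I would set $q^* = g(\w^*,\alpha^*)/h(\alpha^*)$ and argue both directions from the identity $g - q^* h = h\,(g/h - q^*)$ together with $h > 0$. If $(\w^*,\alpha^*)$ solves the QFP \eqref{pb:frac}, then $g/h \ge q^*$ at every feasible point, so $g - q^* h \ge 0$ everywhere with equality at $(\w^*,\alpha^*)$, giving $F(q^*) = 0$; conversely, $F(q^*) = 0$ says $g/h \ge q^*$ on the whole feasible set while $q^*$ is attained at $(\w^*,\alpha^*)$, so that point is optimal. The argument is essentially routine once the positivity of $h$ is in hand; the only step that genuinely needs the hypothesis, and the one I would treat most carefully, is the finiteness of $F$, since without a nonempty optimal solution set $F$ could equal $-\infty$ and the concavity, continuity, and monotonicity chain would break down.
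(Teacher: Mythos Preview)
The paper does not supply its own proof of this lemma; it is quoted directly from \citet{dinkelbach1967nonlinear} and used as a black box. Your argument is a correct, self-contained rendering of Dinkelbach's parametric method, specialized to the present setting via the two observations $g\ge 0$ and $h\ge 1$ on the feasible set. One cosmetic slip: evaluating the $F(q_2)$-objective at a minimizer of $F(q_1)$ yields an \emph{upper bound} on $F(q_2)$, not an equality, so the chain should read $F(q_2)\le F(q_1)-(q_2-q_1)\,h(\alpha_1)<F(q_1)$; the conclusion is unaffected.
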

As $F(q)$ is a concave monotonically decreasing continuous function \cite{dinkelbach1967nonlinear}, the bisection algorithm is well-defined. \citet{bishop2020optimal} further showed that initial lower and upper bounds $q_1$ and $q_2$ for $q^*$ satisfying $F(q_1)\ge0$ and $F(q_2)\le0$ are also easy to obtain.

In each iteration of the bisection method, given $q$, one needs to compute $F(q)$, i.e., the nonconvex optimization problem \eqref{pb:bisub} needs to be solved. To this purpose, \citet{bishop2020optimal} applied the S-lemma with equality \cite{xia2016s} to transform the QCQP \eqref{pb:bisub} into an SDP problem whose optimal objective is exactly $F(q)$. More specifically, define matrices
\begin{equation*}
\begin{array}{l}
\hat A=  \begin{smallpmatrix}
        X^T X&\frac{1}{\gamma} X^{T}(\mathbf{z}-\mathbf{y}) & -X^T \mathbf{y}\\
        \frac{1}{\gamma}(\mathbf{z}-\mathbf{y})^{T}X&\frac{1}{\gamma^2}\|\mathbf{z}-\mathbf{y}\|^2&-\frac{1}{\gamma}(\mathbf{z}-\mathbf{y})^T \mathbf{y}\\
        -\mathbf{y}^T X & -\frac{1}{\gamma}\mathbf{y}^T (\mathbf{z}-\mathbf{y})&\mathbf{y}^T \mathbf{y}\\
        \end{smallpmatrix},\\
 \hat B=
          \begin{smallpmatrix}
          \bm{0}_{n}& & \\
           &\frac{1}{\gamma^2}&\frac{1}{\gamma}\\
           &\frac{1}{\gamma}&1\\
          \end{smallpmatrix}\text{ and }
\hat C =\begin{smallpmatrix}
          {I}_n& &\\
          & 0 & -\frac{1}{2}\\
          & -\frac{1}{2}&0\\
          \end{smallpmatrix}.
\end{array}
\end{equation*}
Given $q\in\R$, problem \eqref{pb:bisub} admits the same objective value with the following SDP
\begin{equation}
\label{pb:SDPb}
\max_{\tau,\lambda}~\tau \quad{\rm s.t.}~\hat A - q\hat B + \lambda\hat C - \tau  E\succeq0,
\end{equation}
where $E=\Diag(0_{n+1},1) \in \R^{(n+2)\times (n+2)}$ is the diagonal matrix with first $n+1$ diagonal entries being zero and the last entry being one. Then, the SDP \eqref{pb:SDPb} is solved by advanced interior point methods (IPM). 

Theoretically, \citet{bishop2020optimal}
showed that under the assumption that each involved SDP is solved exactly, the bisection method needs $\log_2(2 \y^T \y/\varepsilon)$ steps to obtain an $\varepsilon$-optimal estimation of $q^*$ with given tolerance $\varepsilon >0$. Note that in practice,
$\y^T \y$ can be quite large and $\varepsilon$ may be required to be small. Thus, the bisection method may need to solve a significant numbers of SDPs even in the moderate-scale setting, e.g., the numbers of samples $m$ and features $n$ are several thousands. Since the amount of work per iteration of IPM for solving \eqref{pb:SDPb} is ${\cal O}(n^3)$ \cite{Nesterov1993interior, Todd2001semidefinte}, the total computational costs of the bisection method can be prohibitive. Moreover, there in fact exists no optimization solver which can return exact solutions to these SDPs. Hence, the convergence theory of the bisection method may break down and its stability may be implicitly affected due to the accumulation of optimization errors in each iteration. These issues on scalability and stability of the bisection method motivate our study in this paper.

\section{Single SDP Reformulation}
In this section, we present a novel result that shows an optimal solution to \eqref{pb:ori}, or the Stackelberg equilibrium of the SPG-LS, can be obtained by just solving a single SDP with a similar size as the SDP \eqref{pb:SDPb}.
To begin, let us consider the following equivalent formulation of \eqref{pb:ori}:
\begin{equation}
\label{pb:ourfrac}
\begin{array}{cccc}
&  \underset{\mathbf{w},\ \alpha}{\min}\ &\displaystyle \frac{\left\|\alpha \mathbf{z}+X \mathbf{w}-\mathbf{y}-\alpha \mathbf{y}\right\|^{2}}{\left(1+\alpha\right)^{2}}\quad &\text{s.t.}\quad \frac{\mathbf{w}^{T}\mathbf{w}}{\gamma}=\alpha,
\end{array}
\end{equation}
which is slightly different from \eqref{pb:frac} in a scaling of $\alpha$.
Now let us recall the following \textit{S-lemma with equality}, which is the main tool in showing the equivalence of \eqref{pb:bisub} and \eqref{pb:SDPb} in \citet{bishop2020optimal}.
\begin{lemma}[Theorem 3 in \citet{xia2016s}]
        \label{lem:S}
        Let $f(\x)=\x^T Q_1 \x+2\p_1^T \x+q_1$ and $h(\x)= \x^T Q_2\x+2\p_2^T \x+q_2$, where $Q_1,Q_2\in\R^{n\times n}$ are symmetric matrices, $\p_1, \p_2\in\R^n$ and $q_1, q_2\in \R$. If function $h$ takes both positive and negative values and $Q_2\neq 0$, then following two statements are equivalent:
        \begin{enumerate}
                \item There is no $\x\in \R^n$ such that $f(\x) <0$, $h(\x)=0$.
                \item There exists a $\lambda\in\mathbb{R}$ such that $f(\x)+\lambda h(\x)\ge0$.
        \end{enumerate}
\end{lemma}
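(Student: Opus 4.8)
The plan is to prove the two implications of the equivalence separately, with all of the substance concentrated in the direction $(1)\Rightarrow(2)$. The converse $(2)\Rightarrow(1)$ is immediate: if some $\lambda$ makes $f(\x)+\lambda h(\x)\ge 0$ for every $\x$, then at any $\x$ with $h(\x)=0$ we get $f(\x)=f(\x)+\lambda h(\x)\ge 0$, so the system $f(\x)<0,\ h(\x)=0$ is infeasible. For the forward direction my first move is to homogenize. Writing the augmented matrices $M_i=\left(\begin{smallmatrix} Q_i & \p_i \\ \p_i^T & q_i\end{smallmatrix}\right)$ for $i=1,2$, one has $f(\x)=\xi^T M_1 \xi$ and $h(\x)=\xi^T M_2 \xi$ with $\xi=(\x^T,1)^T$. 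The crucial reformulation is that statement $(2)$ is \emph{equivalent} to the existence of $\lambda$ with $M_1+\lambda M_2\succeq 0$, since a quadratic is nonnegative on all of $\R^n$ precisely when its homogenizing matrix is positive semidefinite.

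With this in hand I would pass to the joint range $\mathcal{F}=\{(\xi^T M_1\xi,\ \xi^T M_2\xi):\xi\in\R^{n+1}\}\subseteq\R^2$ and invoke the classical theorem of Dines, which guarantees that $\mathcal{F}$ is a convex cone. Under this lens, statement $(2)$ says exactly that $\mathcal{F}$ lies in a closed half-plane of the form $\{(a,b):a+\lambda b\ge 0\}$ for some $\lambda\in\R$, that is, a half-plane through the origin whose boundary is not the first-coordinate axis and which contains the direction $(1,0)$. The argument then proceeds by contraposition: assuming $(2)$ fails, I would locate an obstruction on the negative first-coordinate axis. Here the hypothesis that $h$ takes both signs enters, forcing $\mathcal{F}$ to meet both open half-planes $\{b>0\}$ and $\{b<0\}$ (take $\xi=(\x_\pm^T,1)^T$ at points where $h$ is positive resp.\ negative). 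Since a convex cone in the plane either spans an angle at most $\pi$ or is all of $\R^2$, a cone meeting both half-planes for which no admissible $\lambda$ works cannot be a proper sector; its closure must be either all of $\R^2$ or a closed half-plane not containing $(1,0)$, and in either case $\overline{\mathcal{F}}$ contains the whole negative first-coordinate axis.

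The step I expect to be the main obstacle is the \emph{realization}: the obstruction just found lives in $\overline{\mathcal{F}}$ and, even once attainment in $\mathcal{F}$ itself is established, may only be produced by a homogeneous direction $\xi=(\bd^T,0)^T$ with $\bd^T Q_2\bd=0$ and $\bd^T Q_1\bd<0$, which corresponds to no finite $\x$. To convert it into a genuine witness $\x\in\R^n$ with $f(\x)<0$ and $h(\x)=0$ at the affine slice $\xi=(\x^T,1)^T$, I would use the remaining hypothesis $Q_2\neq 0$: it supplies a direction along which the quadratic part of $h$ is nonzero, and a continuity/perturbation argument then lets me move the degenerate obstruction off infinity into the slice $t=1$ while keeping $h=0$ and preserving $f<0$. (That $Q_2\neq 0$ is genuinely needed is clear from $n=1$ with $h(x)=x$ and $f(x)=1-x^2$, where $(1)$ holds but no $\lambda$ can make $1-x^2+\lambda x\ge 0$.) Producing such an $\x$ contradicts $(1)$ and closes the proof.
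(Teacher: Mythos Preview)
The paper does not prove this lemma at all: it is quoted verbatim as Theorem~3 of \citet{xia2016s} and used as a black box in the derivation of the single SDP reformulation (Theorem~3.1). So there is no ``paper's own proof'' to compare your proposal against.

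As to the substance of your sketch, the overall strategy---homogenize to the matrices $M_1,M_2$, invoke Dines' theorem on the convexity of the joint numerical range $\mathcal{F}\subseteq\R^2$, and then separate---is one of the standard routes to S-lemma type results, and your identification of the equivalence between statement~(2) and $M_1+\lambda M_2\succeq 0$ (via what the paper states as Lemma~3.2) is correct. You have also correctly isolated where the actual difficulty lies: the obstruction you produce in $\overline{\mathcal{F}}$ may come from a homogeneous direction $\xi=(\bd^T,0)^T$ with $\bd^T Q_2\bd=0$ and $\bd^T Q_1\bd<0$, which does not immediately yield a finite witness $\x$. However, your treatment of this ``realization'' step is only a plan (``a continuity/perturbation argument then lets me move the degenerate obstruction off infinity''), not a proof. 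This is precisely the part of the S-lemma with equality that carries all of the work in \citet{xia2016s}, and it does not reduce to a one-line perturbation: one has to carefully build a curve $\x(t)$ along which $h$ vanishes identically (or crosses zero) while $f$ stays negative, and the hypothesis $Q_2\neq 0$ enters in a somewhat delicate case analysis depending on whether $\bd$ lies in the kernel of $Q_2$ and on the sign structure of $\p_2^T\bd$. Your counterexample showing $Q_2\neq 0$ is necessary is apt, but the forward construction is not yet there.
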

We also need the following result that is well known in quadratic programming.
\begin{lemma}
        [Theorem 2.43 in \citet{beck2014introduction}]
        \label{lem:nonnegqp}
        Let $Q\in\R^{n\times n}$ be a symmetric matrix, $\p\in\R^n$ and $q\in\R$. Then the following two statements are equivalent:
        \begin{enumerate}
                \item $(\x^T, 1) \begin{pmatrix}Q &\p\\ \p^T&q\end{pmatrix}\begin{pmatrix}\x \\ 1\end{pmatrix} \ge 0$ for all $\x\in\R^n$.
                \item $\begin{pmatrix}Q &\p\\ \p^T&q\end{pmatrix}\succeq0$.
        \end{enumerate}
\end{lemma}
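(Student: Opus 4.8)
The statement is the standard fact that, for the bordered matrix $M := \begin{pmatrix} Q & \p \\ \p^T & q \end{pmatrix}$, nonnegativity of the inhomogeneous quadratic form on the affine slice $\{(\x^T,1)^T : \x \in \R^n\}$ is equivalent to positive semidefiniteness of $M$ on all of $\R^{n+1}$. The plan is to establish the two implications separately, with essentially all of the content residing in a single direction.

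The implication from statement 2 to statement 1 is immediate: if $M \succeq 0$, then $\mathbf{v}^T M \mathbf{v} \ge 0$ for every $\mathbf{v} \in \R^{n+1}$, and one simply specializes to the particular vector $\mathbf{v} = (\x^T, 1)^T$ to recover statement 1. No computation is required.

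For the converse, I would fix an arbitrary $\mathbf{v} = (\x^T, t)^T \in \R^{n+1}$ and aim to show $\mathbf{v}^T M \mathbf{v} = \x^T Q \x + 2t\,\p^T \x + t^2 q \ge 0$, splitting into cases according to whether the last coordinate $t$ vanishes. When $t \neq 0$, I would exploit the homogeneity of the quadratic form: factoring out $t^2$ gives $\mathbf{v}^T M \mathbf{v} = t^2\big[(\x/t)^T Q (\x/t) + 2\,\p^T(\x/t) + q\big]$, and the bracketed expression is exactly the quantity appearing in statement 1 evaluated at the point $\x/t$, hence nonnegative; since $t^2 > 0$, the whole product is nonnegative. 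This disposes of every vector lying off the hyperplane $\{t = 0\}$.

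The remaining case $t = 0$ is where the only genuine obstacle lies, since statement 1 constrains the form only on the affine slice $\{t=1\}$ and says nothing \emph{directly} about its behavior on the hyperplane $\{t=0\}$; that information must be recovered by a limiting argument. Here I would apply statement 1 to the scaled point $s\x$ for a scalar $s>0$, obtaining $s^2 \x^T Q \x + 2s\,\p^T\x + q \ge 0$, divide through by $s^2$, and let $s \to \infty$, which yields $\x^T Q \x \ge 0$; as $\mathbf{v}^T M \mathbf{v} = \x^T Q \x$ in this case, the form is again nonnegative. An equivalent and perhaps cleaner way to close this case is to note that $g(t) := \x^T Q \x + 2t\,\p^T\x + t^2 q$ is continuous in $t$ and nonnegative for all $t \neq 0$, so $g(0) = \x^T Q \x \ge 0$ by continuity. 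Either route makes plain that the essential point is passing from the affine slice to its limiting directions at infinity; combining the two cases gives $\mathbf{v}^T M \mathbf{v} \ge 0$ for all $\mathbf{v}$, i.e.\ $M \succeq 0$.
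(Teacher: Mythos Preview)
Your proof is correct. Note, however, that the paper does not actually supply a proof of this lemma: it is quoted as a known result (Theorem~2.43 in \citet{beck2014introduction}) and used as a black box. Your argument is the standard one---homogeneity handles the case $t\neq 0$, and a limiting/continuity argument recovers $Q\succeq 0$ on the hyperplane $t=0$---and would be an appropriate self-contained justification if one were needed.
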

From now on, let us define
\begin{equation}
\begin{array}{l}
\label{eq:formABC}
A=  \begin{smallpmatrix}
        X^T X& X^{T}(\mathbf{z}-\mathbf{y}) & -X^T \mathbf{y}\\
        (\mathbf{z}-\mathbf{y})^{T}X&\|\mathbf{z}-\mathbf{y}\|^2&-(\mathbf{z}-\mathbf{y})^T \mathbf{y}\\
        -\mathbf{y}^T X & -\mathbf{y}^T (\mathbf{z}-\mathbf{y})&\mathbf{y}^T \mathbf{y}\\
        \end{smallpmatrix},\\
\displaystyle B=
          \begin{smallpmatrix}
          \bm{0}_{n}& & \\
           &1&1\\
           &1&1\\
          \end{smallpmatrix}\text{ and }
C =\begin{smallpmatrix}
          \frac{{I}_n}{\gamma}& &\\
          & 0 & -\frac{1}{2}\\
          & -\frac{1}{2}&0\\
          \end{smallpmatrix}.
\end{array}
\end{equation}
With the above facts, we are now ready to present our main result of this section that  \eqref{pb:ori} can be equivalently reformulated as a single SDP, where  our
SDP reformulation follows a similar idea in equations (1.12-1.14) in \cite{nguyen2014sdp}.
\begin{theorem}
        Problem \eqref{pb:ori} is equivalent to the following SDP
        \begin{align}
        \label{pb:SDP}
        \begin{array}{lll}
        &\sup_{\mu,\lambda}& \mu\\
        &\rm s.t.&A-\mu B+\lambda C\succeq0.
        \end{array}
        \end{align}
\end{theorem}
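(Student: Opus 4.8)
The plan is to show the two problems share the same optimal value by proving a matching inequality in each direction, with the \emph{S-lemma with equality} (Lemma~\ref{lem:S}) doing the heavy lifting. First I would homogenize the variables: writing $\mathbf{v}=(\w^T,\alpha,1)^T\in\R^{n+2}$, a direct expansion of the matrices in \eqref{eq:formABC} gives the three identities
\begin{equation*}
\mathbf{v}^T A\mathbf{v}=\|\alpha\z+X\w-\y-\alpha\y\|^2,\quad \mathbf{v}^T B\mathbf{v}=(1+\alpha)^2,\quad \mathbf{v}^T C\mathbf{v}=\tfrac{1}{\gamma}\w^T\w-\alpha,
\end{equation*}
so that \eqref{pb:ourfrac} is exactly the minimization of $\mathbf{v}^T A\mathbf{v}/\mathbf{v}^T B\mathbf{v}$ over those $\mathbf{v}$ with last entry $1$ and $\mathbf{v}^T C\mathbf{v}=0$. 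On this feasible set the constraint forces $\alpha=\w^T\w/\gamma\ge0$, whence $\mathbf{v}^T B\mathbf{v}=(1+\alpha)^2\ge1>0$; together with $\mathbf{v}^T A\mathbf{v}\ge0$ this shows the optimal value $\mu^*$ of \eqref{pb:ourfrac} is finite and nonnegative (the feasible point $\w=\mathbf 0,\alpha=0$ gives the bound $\mu^*\le\|\y\|^2$).

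For the easy direction I would take any $(\mu,\lambda)$ feasible for \eqref{pb:SDP} and any feasible $(\w,\alpha)$. Then, using $\mathbf{v}^T C\mathbf{v}=0$, $0\le\mathbf{v}^T(A-\mu B+\lambda C)\mathbf{v}=\mathbf{v}^T A\mathbf{v}-\mu\,\mathbf{v}^T B\mathbf{v}$; dividing by $\mathbf{v}^T B\mathbf{v}>0$ and taking the infimum over the feasible set yields $\mu\le\mu^*$. Hence the supremum in \eqref{pb:SDP} is at most $\mu^*$.

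The reverse inequality is the crux, and here is where I would apply Lemma~\ref{lem:S} directly at the optimal value $\mu^*$ rather than at a sequence of Dinkelbach parameters. Regarding $(\w,\alpha)\in\R^{n+1}$ as the free variable, set $f(\w,\alpha)=\mathbf{v}^T(A-\mu^* B)\mathbf{v}$ and $h(\w,\alpha)=\mathbf{v}^T C\mathbf{v}$, both genuine quadratics in $(\w,\alpha)$. By definition of $\mu^*$, every feasible point satisfies $\mathbf{v}^T A\mathbf{v}\ge\mu^*\,\mathbf{v}^T B\mathbf{v}$, i.e.\ there is no $(\w,\alpha)$ with $f<0$ and $h=0$. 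I would then verify the two hypotheses of Lemma~\ref{lem:S}: the quadratic part of $h$ is $\Diag(I_n/\gamma,0)\neq0$, and $h(\mathbf 0,\alpha)=-\alpha$ takes both signs. The lemma therefore supplies a scalar $\lambda$ with $f(\w,\alpha)+\lambda\,h(\w,\alpha)\ge0$ for all $(\w,\alpha)$, that is $\mathbf{v}^T(A-\mu^* B+\lambda C)\mathbf{v}\ge0$ for every $\mathbf{v}$ of the form $(\w^T,\alpha,1)^T$.

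The final step converts this into the desired linear matrix inequality. Since the displayed nonnegativity is precisely statement~(1) of Lemma~\ref{lem:nonnegqp} for the matrix $M:=A-\mu^* B+\lambda C$ (with $(\w^T,\alpha)$ playing the role of $\x$), statement~(2) gives $M\succeq0$. Thus $(\mu^*,\lambda)$ is feasible for \eqref{pb:SDP}, so its supremum is at least $\mu^*$; combined with the easy direction the two values coincide and the supremum is attained. I expect the main obstacle to be conceptual rather than computational: recognizing that the S-lemma can be invoked once, at $\mu=\mu^*$, to produce a single multiplier $\lambda$ (this is exactly what collapses the bisection into one SDP), and confirming that $\mu^*$ is finite so that the equality-S-lemma hypotheses are legitimately satisfied. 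The matrix expansions and the hypothesis checks are then routine.
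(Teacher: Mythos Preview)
Your proof is correct and follows essentially the same route as the paper: both arguments hinge on the S-lemma with equality (Lemma~\ref{lem:S}) to pass from ``no feasible point has objective below $\mu$'' to the existence of a multiplier $\lambda$, and then on Lemma~\ref{lem:nonnegqp} to turn the resulting global quadratic nonnegativity into the LMI $A-\mu B+\lambda C\succeq0$. The only cosmetic difference is that the paper packages the argument as a single chain of equalities (invoking the S-lemma for every $\mu$), whereas you split it into two inequalities and invoke the S-lemma once at $\mu^*$; your version additionally makes explicit the finiteness of $\mu^*$ and the attainment of the SDP supremum, which the paper defers to the appendix.
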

\begin{proof}
        Consider the equivalent formulation \eqref{pb:ourfrac}.
        Let $f(\w,\alpha) = \left\|\alpha \mathbf{z}+X \mathbf{w}-\mathbf{y}-\alpha \mathbf{y}\right\|^{2}$, $p(\w,\alpha) = \frac{\mathbf{w}^{T}\mathbf{w}}{\gamma}-\alpha $ and denote by $v_{\rm frac}$ the optimal value of \eqref{pb:ourfrac}.
        Recall the definitions of $A,B$, and $C$ in \eqref{eq:formABC}.
        Then, we conduct the reformulation in the following manner:
        \begin{align}
        v_{\rm frac} & = \underset{\w,\alpha}{\inf}~  \left\{\frac{f(\w,\alpha)}{(1+\alpha)^2}: p(\w,\alpha)=0 \right\} \nonumber \\
        & = \sup_\mu \left\{\mu:  \begin{array}{ll}
        \{(\w,\alpha)\mid f(\w,\alpha)-\mu(1+\alpha)^2 < 0,\\
        p(\w,\alpha)=0\}=\emptyset\end{array} \right\} \nonumber\\
        & = \sup_\mu\left\{\mu: \begin{array}{l}
        \exists\lambda\in\R \text{ s.t.} f(\w,\alpha)-\mu(1+\alpha)^2\\
        +\lambda p(\w,\alpha)\ge0,\ \forall \w\in \R^n,\alpha\in\R \end{array}\right\}\label{pb:S2}\\
        \ & = \sup_{\mu,\lambda} \left\{\mu:
        \begin{array}{l}  \begin{smallpmatrix}
        \w^T&  \alpha &         1
        \end{smallpmatrix}(A-\mu B+\lambda C)\begin{smallpmatrix}
        \w \\
        \alpha \nonumber\\
        1
        \end{smallpmatrix}\ge0,\\
        \forall \w\in \mathbb{R}^n,\alpha\in\mathbb{R}
        \end{array}\right\} \nonumber\\
        \ & = \sup_{\mu,\lambda} \left\{\mu:  A-\mu B+\lambda C\succeq0\right\}\label{pb:singlesdp},
        \end{align}
        where \eqref{pb:S2} is due to the S-lemma with equality in Lemma \ref{lem:S} and \eqref{pb:singlesdp} is due to Lemma \ref{lem:nonnegqp}.
\end{proof}

We briefly remark that there exists an optimal solution for the SDP \eqref{pb:SDP} and it can be used to recover an optimal solution to  \eqref{pb:ori}. In fact, we can recover an optimal solution to \eqref{pb:ori} by either doing a rank-1 decomposition for the dual solution of SDP \eqref{pb:SDP}, thanks to \citet{sturm2003cones},  or solving a linear system with an additional equation as in step 8 in Algorithm \ref{alg:socp}\footnote{See the discussions after Theorem \ref{thm:socp}.}.
More details are given in Appendix.

Up to now, we have shown that to obtain a global optimal solution to the nonconvex fractional program \eqref{pb:ori}, only a single SDP needs to be solved.
A crucial observation is that our single SDP \eqref{pb:SDP} has a similar form and the same dimension of the matrices with \eqref{pb:SDPb}, the subproblem in each iteration of the bisection method.
We remark the main differences: (i) the bisection parameter $q$ is the variable $\mu$ in our formulation; (ii) our formulation does not involve a $\tau$ which is used for generating new half interval in the bisection method.
From the similar forms of two SDPs, we can expect that solving the SDPs \eqref{pb:SDP} and \eqref{pb:SDPb} needs a similar CPU time.
However, the bisection method needs to solve a series of SDPs.
Indeed, for each test instance in our numerical experiments, the bisection method needs to solve about 30 SDPs.
In other words, our single SDP method is a more efficient way to obtain $q^*$ (or equivalently, $\mu$ in \eqref{pb:SDP}) such that $F(q^*)=0$, which closely relates to an optimal solution of problem \eqref{pb:frac} (or equivalently, problem \eqref{pb:ori}) in view of Lemma \ref{lem:fq}\footnote{In fact, we use Lemma 2.1 slightly different from its original statement with a scaling of $\gamma$ here and in the discussions after Theorem \ref{thm:socp}.},  than the bisection method.

\section{SOCP Reformulation}
\label{sec:SOCP}
Though our single SDP approach introduced in the previous section for finding Stackelberg equilibrium of SPG-LS has already been much faster than the bisection method, the fact that solving a large-scale SDP requires extensive computations motivates us to make a step further of seeking more reductions.
For this purpose, in this section, by using a simultaneous diagonalizability of submatrices in the linear matrix inequality (LMI) constraint in \eqref{pb:SDP}, we can further reformulate SDP \eqref{pb:SDP} as an SOCP that can be solved much more efficiently. We briefly describe our main idea in Algorithm \ref{alg:socp}.

\begin{algorithm}[hptb]
\caption{SOCP method for solving \eqref{pb:ori}}
\label{alg:socp}
\begin{algorithmic}[1]
\STATE {\bfseries Input:} matrices $A,B,C$ in \eqref{eq:formABC}
\STATE set $V_1$ as in \eqref{eq:v1}
\STATE set $\bar A,\bar B,\bar C$ as in \eqref{eq:formbarA}, \eqref{eq:formbarB}, \eqref{eq:formbarC}
\STATE do spectral decomposition to matrix $\bar A_{11}$ in \eqref{eq:formbarA} with $\bar{A}_{11} = HDH^{T}$
\STATE set $V_2$ as in \eqref{eq:V2}
\STATE obtain the matrices $\tilde A=V_2\bar AV_2,\tilde B=V_2\bar BV_2,\tilde C=V_2\bar CV_2$ in forms \eqref{eq:formtilA} and \eqref{eq:formtilBC} with diagonal $n+1$th order leading principal submatrices
\STATE solve the SOCP problem \eqref{pb:SOCP} to obtain optimal $\mu^*,\lambda^*$
\STATE obtain $\w^*$ by finding a solution of the following linear system
\begin{equation*}
(A-\mu^* B+\lambda^* C)\begin{smallpmatrix}
        \w \\
        \alpha \nonumber\\
        1
        \end{smallpmatrix}=0
\end{equation*}
satisfying $\frac{1}{\gamma}\w^T\w=\alpha$
\end{algorithmic}
\end{algorithm}

The motivation of our reformulation comes from simple observations on matrices $A$, $B$ and $C$.
The first key observation is that $B$ and $C$ can be simultaneously diagonalized by congruence. Indeed, letting
\begin{equation}
\label{eq:v1}
V_1=
     \begin{smallpmatrix}
     I_n&0&0\\
     0&\frac{1}{\sqrt{\gamma}}&1\\
     0&-\frac{1}{\sqrt{\gamma}}&1\\
     \end{smallpmatrix},
\end{equation}
 we have from \eqref{eq:formABC}
\begin{equation}
\label{eq:formbarB}
\bar B:=V_1^TBV_1=\begin{pmatrix}
\bm {0}_{n+1}&\\
&4
\end{pmatrix},
\end{equation}  and
\begin{equation}
\label{eq:formbarC}
\bar C:=V_1^TCV_1= \begin{pmatrix}
\frac{1}{\gamma}I_{n+1}&\\
&-1
\end{pmatrix}.
\end{equation}
For convenience, let
\begin{equation}
\label{eq:formbarA}
\bar A:=V_1^TAV_1 =\begin{pmatrix}
          \bar A_{11}&\bar A_{12}\\
          \bar A_{12}^T&
          \bar A_{22}
          \end{pmatrix}.
\end{equation}
The second key observation is that
the $n+1$th order leading principal submatrices of $A,B$ and $C$ can be  simultaneously diagonlizable by congruence.
To see this, applying spectral decomposition to the real symmetric matrix $\bar A_{11}$ yields $\bar{A}_{11} = HDH^{T}$, where $H$ is an $(n+1)\times(n+1)$ orthogonal matrix and 
 $D=\Diag(\bd)$
is a diagonal matrix with $d_i$ being its $i$th diagonal entry.
Define
\begin{equation}
\label{eq:V2}
V_2= \begin{pmatrix}
     H&0\\
     0&1\\
     \end{pmatrix}.
\end{equation}
Now we have
\begin{equation}
\label{eq:formtilA}
\tilde A:= V_2^T\bar AV_2 =
\begin{pmatrix}
D & \bb\\
\bb^T&c\\
\end{pmatrix},
\end{equation}
where $\bb\in\R^{n+1}$ and $c\in\R$.
Since $H^TH=I$, we also have
\begin{equation}
\label{eq:formtilBC}\tilde B:= V_2^T\bar BV_2 =\bar B\text{~~ and ~~} \tilde C= V_2^T\bar CV_2=\bar C.
\end{equation}
As $V_1$ and $V_2$ are both invertible matrices, the LMI constraint in \eqref{pb:SDP} is equivalent to
\begin{equation}
\label{eq:tildeLMI}
\tilde A-\mu\tilde B+\lambda \tilde C\succeq 0.
\end{equation}
From the generalized Schur complement \cite{zhang2006schur}, the LMI \eqref{eq:tildeLMI} is equivalent to
\begin{equation}
\label{eq:Schur}
\begin{array}{l}
D+\tfrac{\lambda}{\gamma}I_{n+1} \succeq 0,\\
\bb\in {\rm Range}(D+\tfrac{\lambda}{\gamma}I_{n+1}),\\
c-4\mu-\lambda-\bb^{T}(D+\tfrac{\lambda}{\gamma}I_{n+1} )^{\dagger}\bb \succeq  0,
\end{array}
\end{equation}
where $(M)^{\dagger}$ denotes the Moore-Penrose pseudoinverse of matrix $M$.
As $D$ is a diagonal matrix, by defining $\frac{0}{0}=0$, \eqref{eq:Schur} is further equivalent to
\begin{equation*}
\begin{array}{l}d_i+\tfrac{\lambda}{\gamma}\ge0, \mbox{ and } b_i=0\text{ if }d_i+\tfrac{\lambda}{\gamma}=0, ~i\in [n+1],\\
c -4\mu-\lambda- \sum_{i=1}^{n+1}\frac{b_i^2}{d_i+\lambda/\gamma}\ge0.
\end{array}
\end{equation*}
These constraints can be further rewritten as
\begin{equation*}
\begin{array}{l}
d_i+\tfrac{\lambda}{\gamma}\ge0, ~ i\in [n+1], \\
c-4\mu-\lambda- \sum_{i=1}^{n+1} s_i\ge0,\\
s_i(d_i+\tfrac{\lambda}{\gamma})\ge b_i^2,~ i \in [n+1].
\end{array}
\end{equation*}
For $i\in [n+1]$, each constraint  $s_i(d_i+\tfrac{\lambda}{\gamma})\ge b_i^2$
can be expressed as a rotated second order cone constraint \cite{alizadeh2003second}, which is equivalent to the second order cone constraint
\[
\sqrt{b_i^2 +\left(\tfrac{s_i+d_i-\tfrac{\lambda}{\gamma}}{2}\right)^2 }\le \frac{s_i+d_i+\tfrac{\lambda}{\gamma}}{2}
.
\]

Consequently, we have the following theorem.
\begin{theorem}
\label{thm:socp}
With the same notation in this section, problem \eqref{pb:SDP} is  equivalent to the following SOCP problem
\begin{align}
\label{pb:SOCP}
    \begin{array}{ll}
         \sup_{\mu,\lambda,\s}& \mu\\
         \rm s.t.&d_i+\tfrac{\lambda}{\gamma}\ge0, ~ i\in [n+1], \\  &c-4\mu-\lambda-\sum_{i=1}^{n+1}s_i\ge0,\\   &s_i(d_i+\tfrac{\lambda}{\gamma})\ge~ b_i^2,~i \in [n+1].
    \end{array}
\end{align}
\end{theorem}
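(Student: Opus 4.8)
The plan is to assemble the chain of equivalences developed above into a single argument, while tracking that the objective $\mu$ and the supremum over $(\mu,\lambda)$ are preserved at every step. First I would record the elementary but essential fact that congruence by an invertible matrix preserves positive semidefiniteness: for any symmetric $M$ and any invertible $V$, $M\succeq0$ if and only if $V^TMV\succeq0$. Since neither $V_1$ in \eqref{eq:v1} nor $V_2$ in \eqref{eq:V2} depends on $(\mu,\lambda)$, applying this fact twice shows that the LMI $A-\mu B+\lambda C\succeq0$ holds exactly when $\tilde A-\mu\tilde B+\lambda\tilde C\succeq0$, uniformly over the entire feasible set. This is precisely \eqref{eq:tildeLMI}, and it already reduces the SDP \eqref{pb:SDP} to the diagonalized LMI without changing the feasible pairs $(\mu,\lambda)$ or the objective value.

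Second, I would write the transformed pencil in block form. Combining \eqref{eq:formtilA}, \eqref{eq:formtilBC}, \eqref{eq:formbarB} and \eqref{eq:formbarC} gives
\[
\tilde A-\mu\tilde B+\lambda\tilde C=\begin{pmatrix}D+\tfrac{\lambda}{\gamma}I_{n+1}&\bb\\\bb^T&c-4\mu-\lambda\end{pmatrix},
\]
a symmetric matrix whose leading principal block $D+\tfrac{\lambda}{\gamma}I_{n+1}$ is diagonal. Applying the generalized Schur complement \cite{zhang2006schur} to this block then yields the three conditions in \eqref{eq:Schur}, and the diagonality of $D$ lets me read off the range and pseudoinverse conditions entrywise, producing the componentwise system displayed after \eqref{eq:Schur}: $d_i+\lambda/\gamma\ge0$, $b_i=0$ whenever $d_i+\lambda/\gamma=0$, together with the scalar inequality $c-4\mu-\lambda-\sum_i b_i^2/(d_i+\lambda/\gamma)\ge0$ under the convention $0/0=0$.

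The last step, and the one requiring the most care, is the introduction of the slack variables $\s$ and the verification that this does not alter the supremum of $\mu$; concretely I would show the two feasible sets have the same projection onto $(\mu,\lambda)$. Given a feasible $(\mu,\lambda)$ of the componentwise system, setting $s_i=b_i^2/(d_i+\lambda/\gamma)$ (again with $0/0=0$) produces a point satisfying all constraints of \eqref{pb:SOCP}. Conversely, any $(\mu,\lambda,\s)$ feasible for \eqref{pb:SOCP} forces $s_i\ge b_i^2/(d_i+\lambda/\gamma)$ wherever $d_i+\lambda/\gamma>0$, so $\sum_i s_i$ dominates the fractional sum and the scalar inequality of the componentwise system is recovered. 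The subtle case is $d_i+\lambda/\gamma=0$: here the rotated cone constraint $s_i(d_i+\lambda/\gamma)\ge b_i^2$ collapses to $0\ge b_i^2$, automatically enforcing $b_i=0$ — exactly the range condition that was dropped from the explicit list. Thus the rotated cone formulation silently absorbs the range requirement, and this is the only place where one must argue rather than compute. Finally, each constraint $s_i(d_i+\lambda/\gamma)\ge b_i^2$ with $d_i+\lambda/\gamma\ge0$ is a rotated second order cone constraint, equivalent to the standard second order cone constraint displayed just before the theorem; since the objective remained $\mu$ throughout, the SDP \eqref{pb:SDP} and the SOCP \eqref{pb:SOCP} share the same optimal value and the same feasible $(\mu,\lambda)$, which is the claimed equivalence.
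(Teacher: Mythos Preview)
Your proposal is correct and follows essentially the same route as the paper: congruence by $V_1,V_2$ to reach the diagonalized LMI \eqref{eq:tildeLMI}, the generalized Schur complement to obtain \eqref{eq:Schur}, componentwise reading via diagonality of $D$, and then the lifting to slack variables $\s$ interpreted as rotated second order cones. Your write-up is in fact more careful than the paper's on the slack-variable step, since you explicitly verify that the projection onto $(\mu,\lambda)$ is unchanged and you isolate the boundary case $d_i+\lambda/\gamma=0$ where the rotated cone constraint automatically enforces the range condition $b_i=0$.
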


Note that based on our construction, i.e., the congruence transformations to the matrices in the LMI constraint in \eqref{pb:SDP},  any optimal solution $(\mu^*,\lambda^*)$  to \eqref{pb:SOCP} is still optimal to \eqref{pb:SDP}.
We claim {that} an optimal {solution} $\w^*$ to \eqref{pb:ori} can be recovered by solving the linear system with an additional equation in step 8 in Algorithm \ref{alg:socp}. Indeed, Lemma \ref{lem:fq}, the strong duality theory of SDPs and the S-lemma with equality guarantee the existence of a rank-1 solution to the SDP relaxation of
\begin{equation*}
\begin{aligned}
\min\limits_{\w ,\alpha}\quad& \left\|\alpha \mathbf{z}+X \mathbf{w}-\mathbf{y}-\alpha \mathbf{y}\right\|^{2} -\mu^*(1+\frac{\alpha}{\gamma})^2\\
\rm s.t \quad &\frac{1}{\gamma}\w ^T\w=\alpha,
\end{aligned}
\end{equation*}
and the solution solves the following KKT system of the corresponding SDP\ relaxation
\begin{equation*}
\left\{
\begin{array}{l}
\langle C,W\rangle=0,\\
W_{n+2,n+2}=1,\\
W\succeq 0, \\
\langle A-\mu^{*} B+\lambda^* C,W \rangle  =0.
\end{array}
\right.
\end{equation*}
By setting $W=\begin{smallpmatrix}
        \w \\
        \alpha \nonumber\\
        1
        \end{smallpmatrix}\begin{smallpmatrix}
        \w^T &
        \alpha \nonumber&
        1
        \end{smallpmatrix}$, the above facts are equivalent to
\begin{equation*}
(A-\mu^* B+\lambda^* C)\begin{smallpmatrix}
        \w \\
        \alpha \nonumber\\
        1
        \end{smallpmatrix}=0,\quad \frac{1}{\gamma}\w^T\w=\alpha,
\end{equation*}
due to $A-\mu^{*} B+\lambda^* C\succeq0$.
One may think that the above equations are difficult to solve. In fact, the linear system usually only has a unique solution and it suffices to solve the linear system solely. A sufficient condition to guarantee this is that the matrix $(A-\mu^{*} B+\lambda^{*} C)$ is of rank $n+1$, which is exactly the case in all our numerical tests.
More discussions on the solution recovering are given in Appendix.

In general, SOCPs can be solved much faster than SDPs. For our problem, it can be seen that IPMs for solving SOCP \eqref{pb:SOCP} takes ${\cal O}(n)$ costs per iteration \cite{alizadeh2003second, Andersen2003implementing,Tutuncu2003solving} which is of orders magnitudes faster than the case ${\cal O}(n^3)$ in  solving SDP \eqref{pb:SDP} using interior point methods. The high efficiency of our SOCP approach is also evidenced by our numerical tests.
\section{Experiment Results}
In this section, we conduct numerical experiments on both synthetic and real world datasets to verify the superior performance of our proposed algorithms in terms of both the computational time and the learning accuracy.  We apply the powerful commercial solver MOSEK  \cite{aps2021mosek}  to solve all the SDPs and SOCPs  in the bisect method and ours. 


All simulations are implemented using MATLAB R2019a on a PC running Windows 10 Intel(R) Xeon(R) E5-2650 v4 CPU (2.2GHz) and 64GB RAM. We report the results of two real datasets and three synthetic datasets and defer other results to the supplementary material.\footnote{Our code is available at \url{https://github.com/JialiWang12/SPGLS}.}
\subsection{Real World Dataset}\label{sec:5.1}
We first demonstrate the accuracy and efficiency of our proposed methods on two real datasets. We compare the average mean squared error (MSE) as well as the wall-clock time of our SDP and SOCP approaches with those of the bisection method in \citet{bishop2020optimal}, the ridge regression and a nonlinear programming reformulation of the SPG-LS in \citet{bruckner2011stackelberg}. Similar as in \citet{bishop2020optimal}, to evaluate the learning accuracy of the algorithms, we perform 10-fold cross-validation and compare their average MSE for 40 different values of the parameter  $\gamma\in[1\times10^{-3},0.75]$ in \eqref{pb:ori}.  For each   $\gamma$, a grid search on 9 logarithmically spaced points $[1\times 10^{-5},1000]$ is used to compute the best regularization
parameter for the ridge regression.   We also compare the running time of all the methods  at $\gamma = 0.5$, averaged over 10 trials to further illustrate the efficiency of our methods.
For the testing purpose, we first apply min-max normalization to the raw data $X$ and scale the labels $y$, $z$ to $y = y/(\beta\|y\|_\infty)$ and $z = z/(\beta\|y\|_\infty)$, respectively. These labels will be scaled back to compute  the average MSE. It is worth noting that the constant $\beta$ can be adjusted with respect to different datasets.
\subsubsection{Wine Dataset}
\begin{figure*}[htbp]
\centering
\subfigure{
\includegraphics[scale=0.27]{./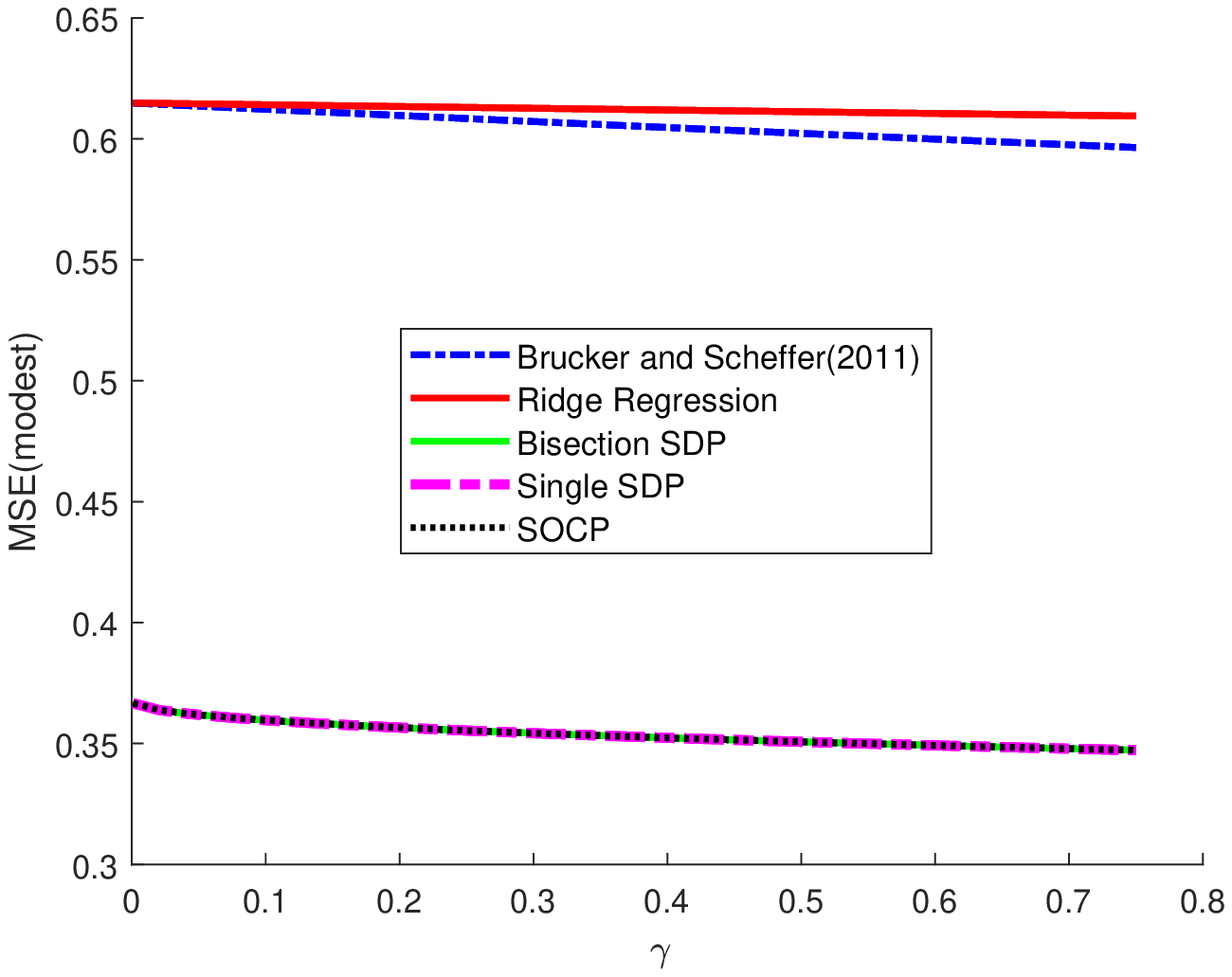} \label{fig:winemodestmse}
}
\subfigure{
\includegraphics[scale=0.27]{./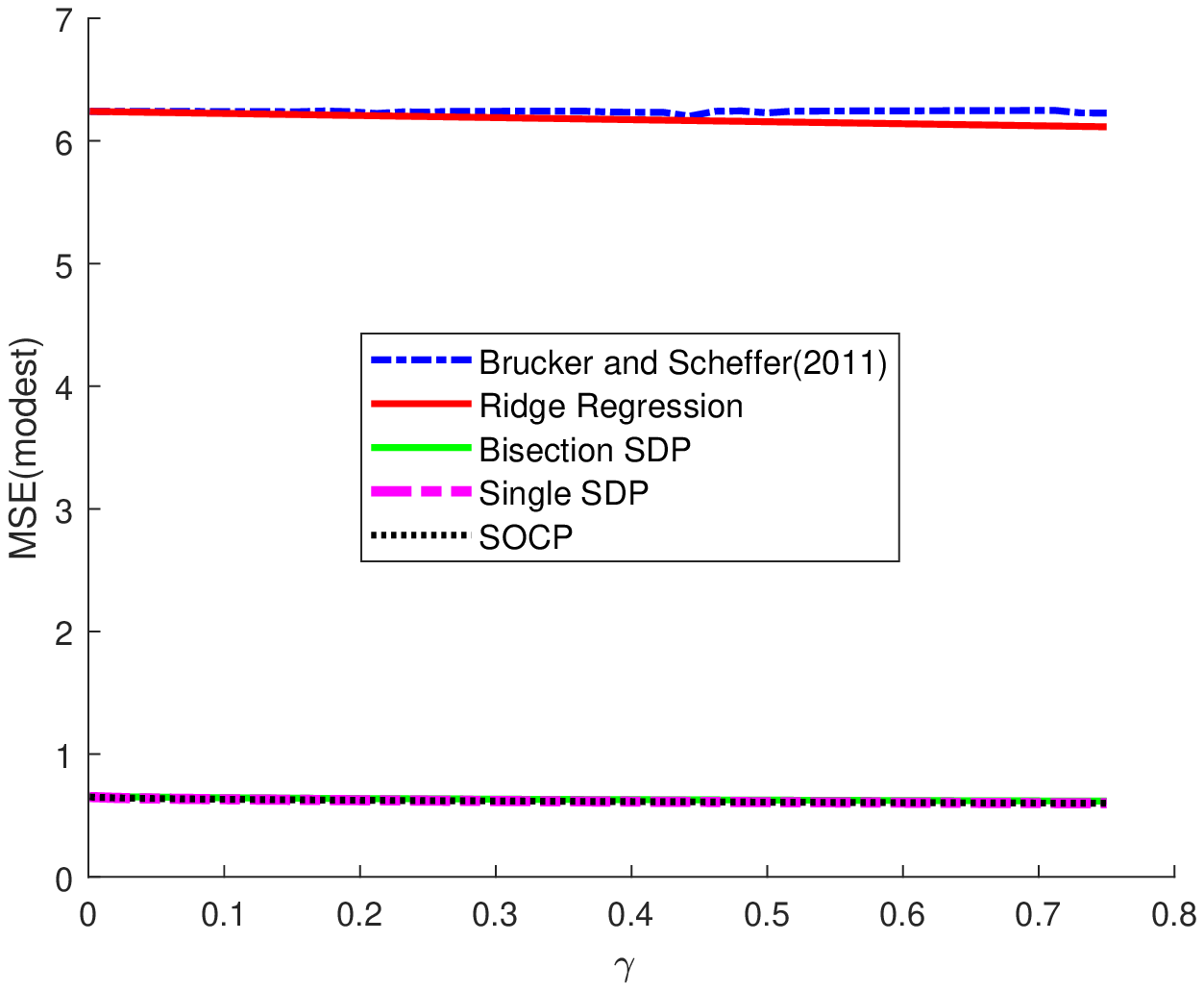} \label{fig:wineseveremse}
}
\subfigure{
\includegraphics[scale=0.27]{./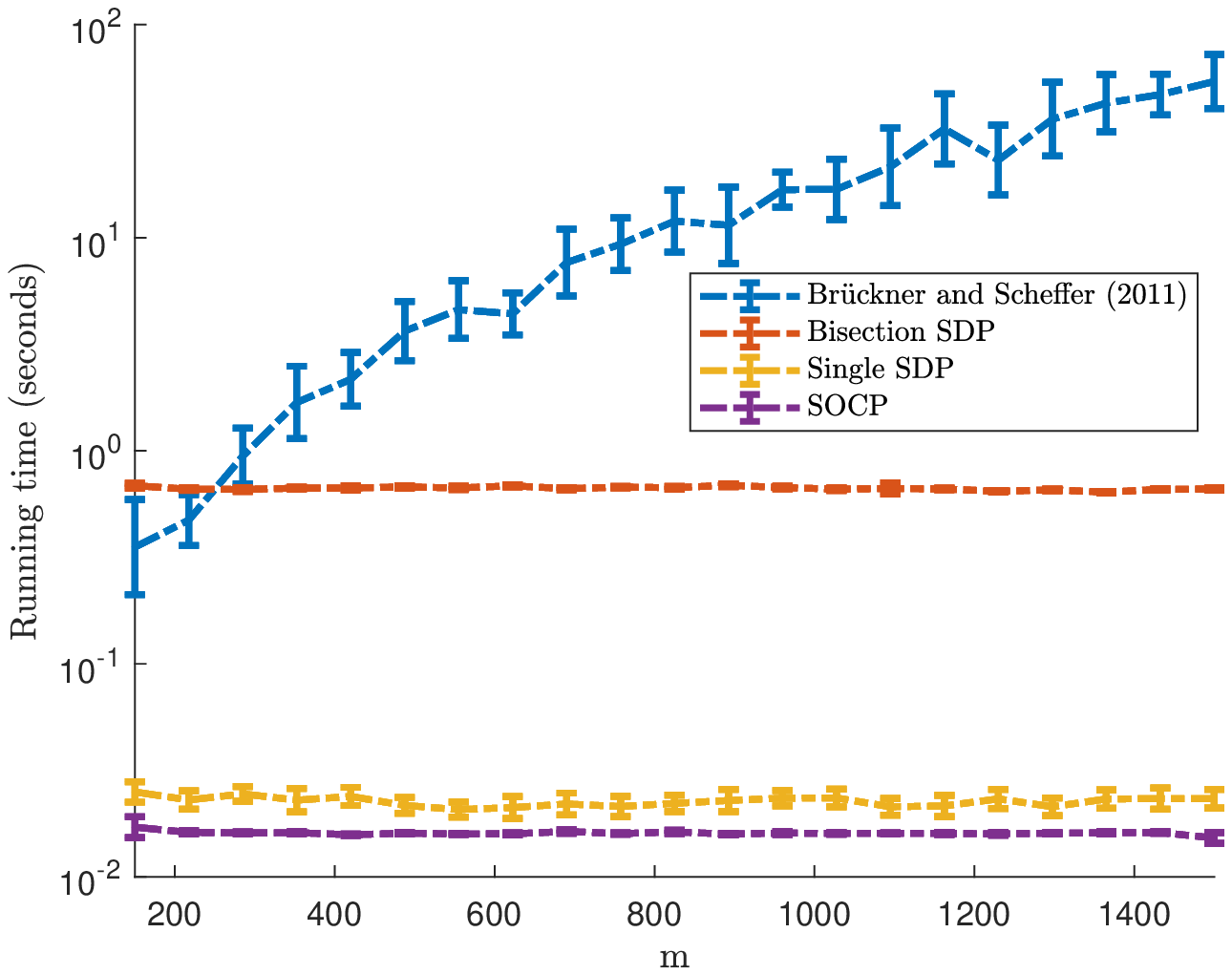} \label{fig:winemodestime}
}
\subfigure{
\includegraphics[scale=0.27]{./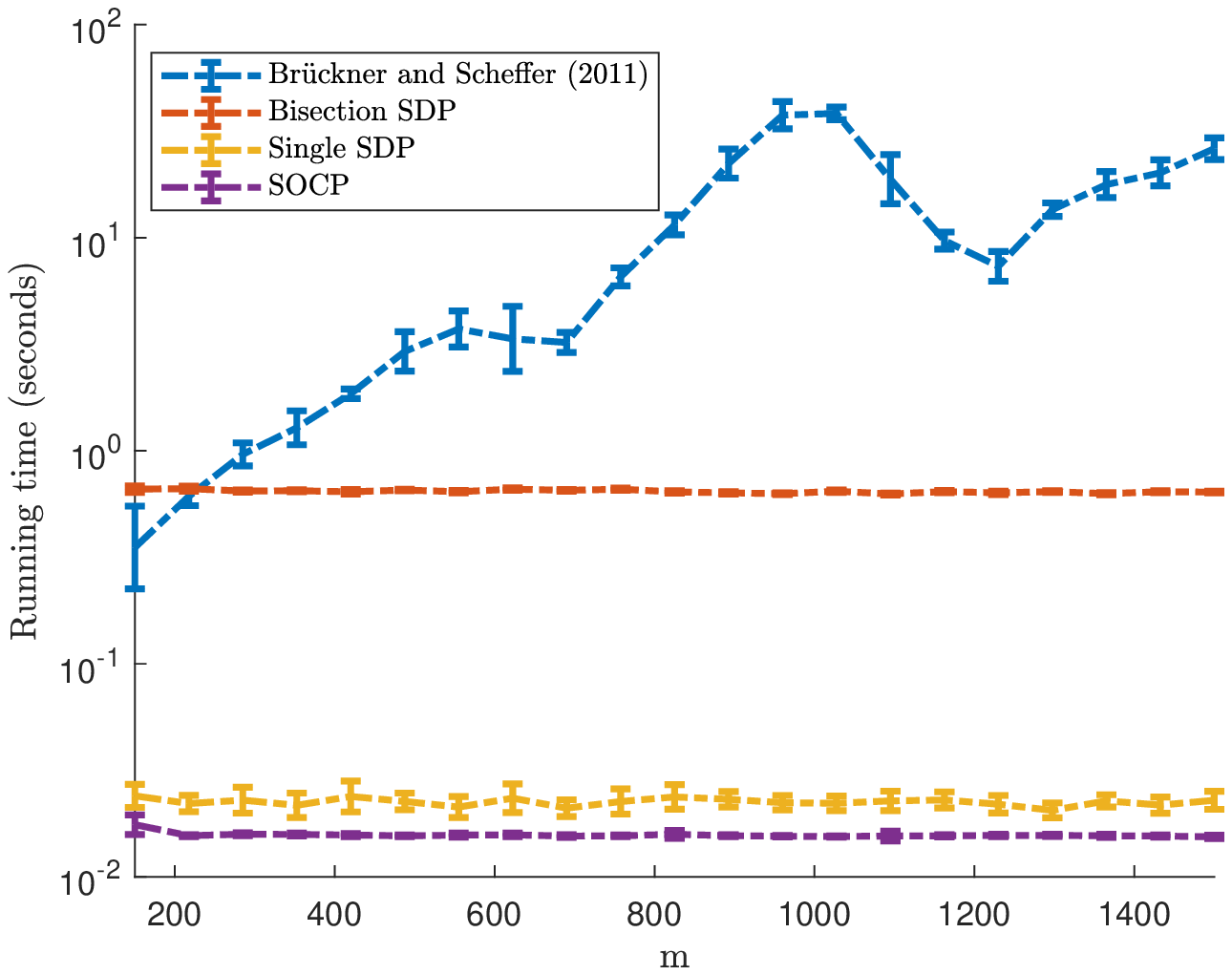} \label{fig:wineseveretime}}
\caption{Performance comparison between different algorithms on the red wine dataset.  The left two plots correspond to MSE result generated  by $\mathcal A_{\rm modest}$ and $\mathcal A_{\rm severe}$, whilst the right two plots correspond to wall-clock time comparison generated  by $\mathcal A_{\rm modest}$ and $\mathcal A_{\rm severe}$.}
\label{fig:winemsetime}
\end{figure*}

\begin{figure*}[htbp]
\centering
\subfigure{
\includegraphics[scale=0.27]{./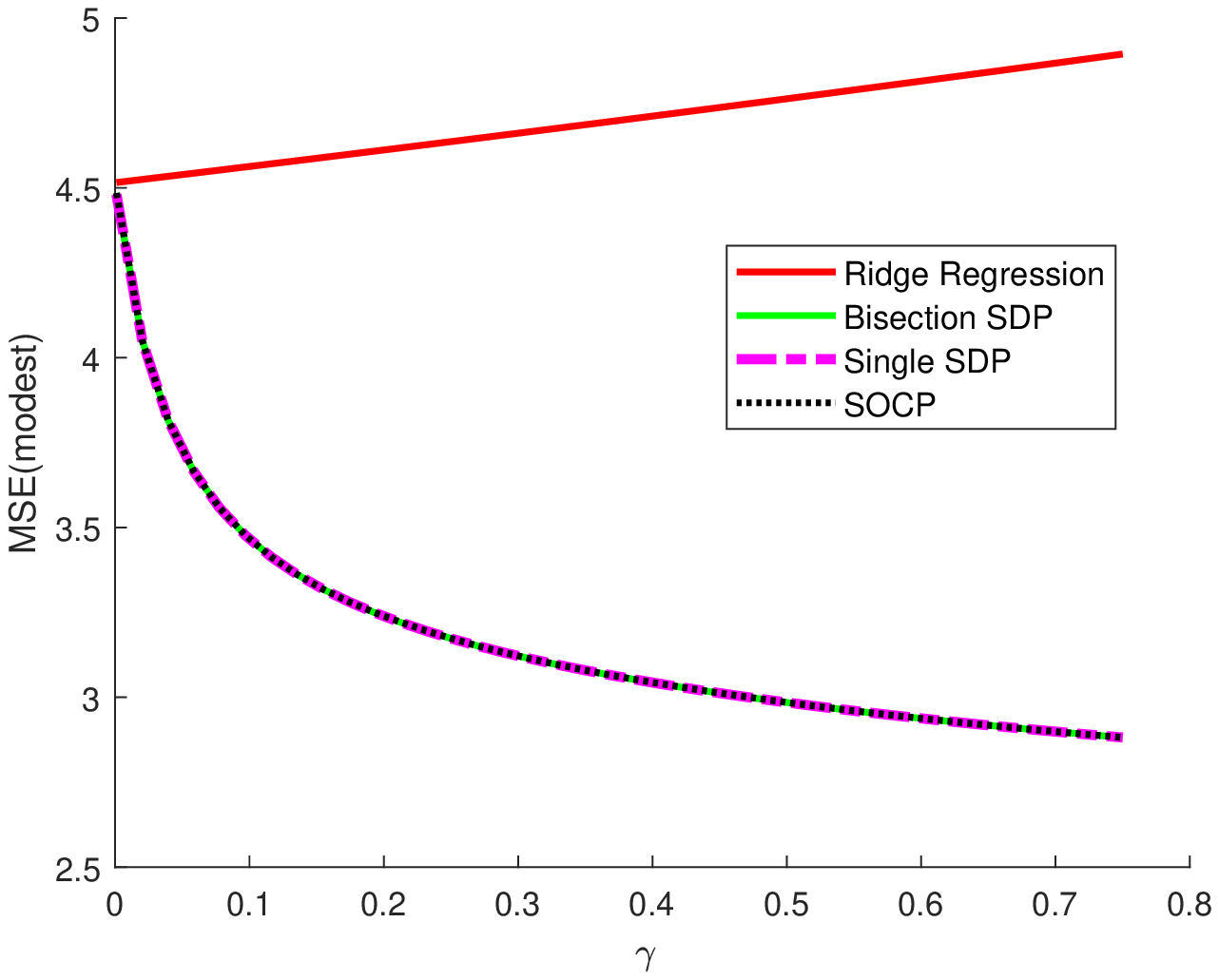} \label{fig:blogmodestmse}
}
\subfigure{
\includegraphics[scale=0.27]{./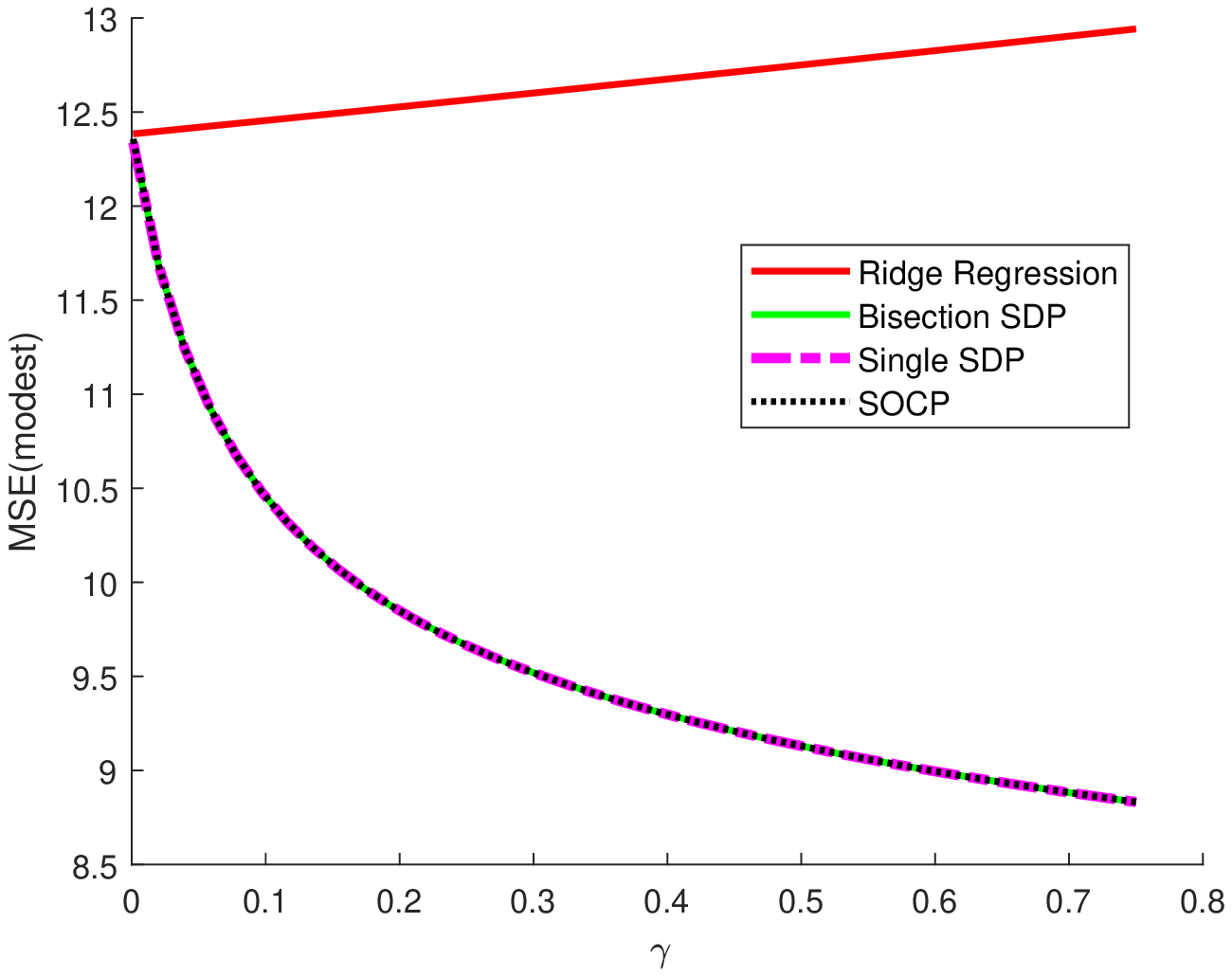} \label{fig:blogseveremse}
}
\subfigure{
\includegraphics[scale=0.27]{./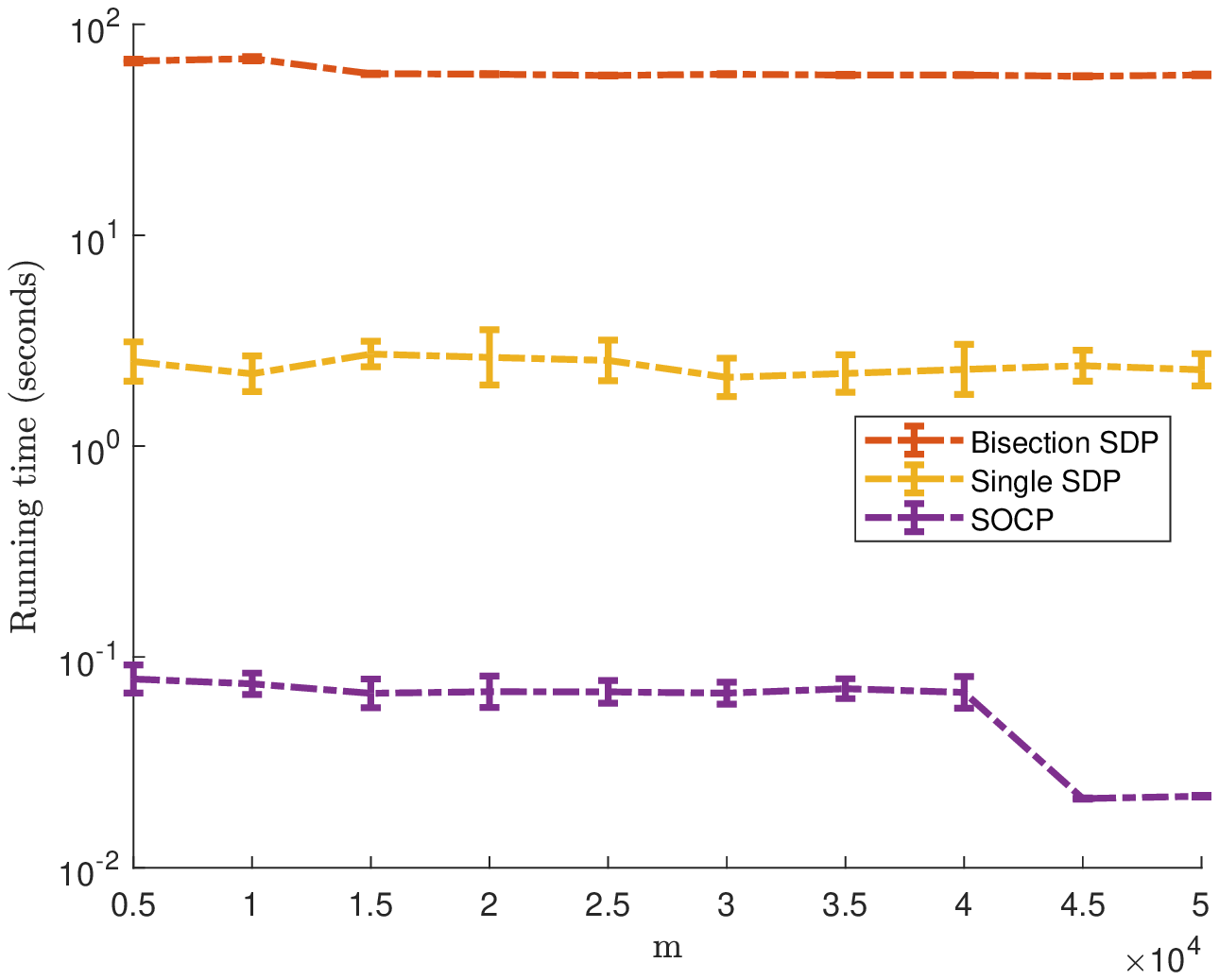} \label{fig:blogmodestime}
}
\subfigure{
\includegraphics[scale=0.27]{./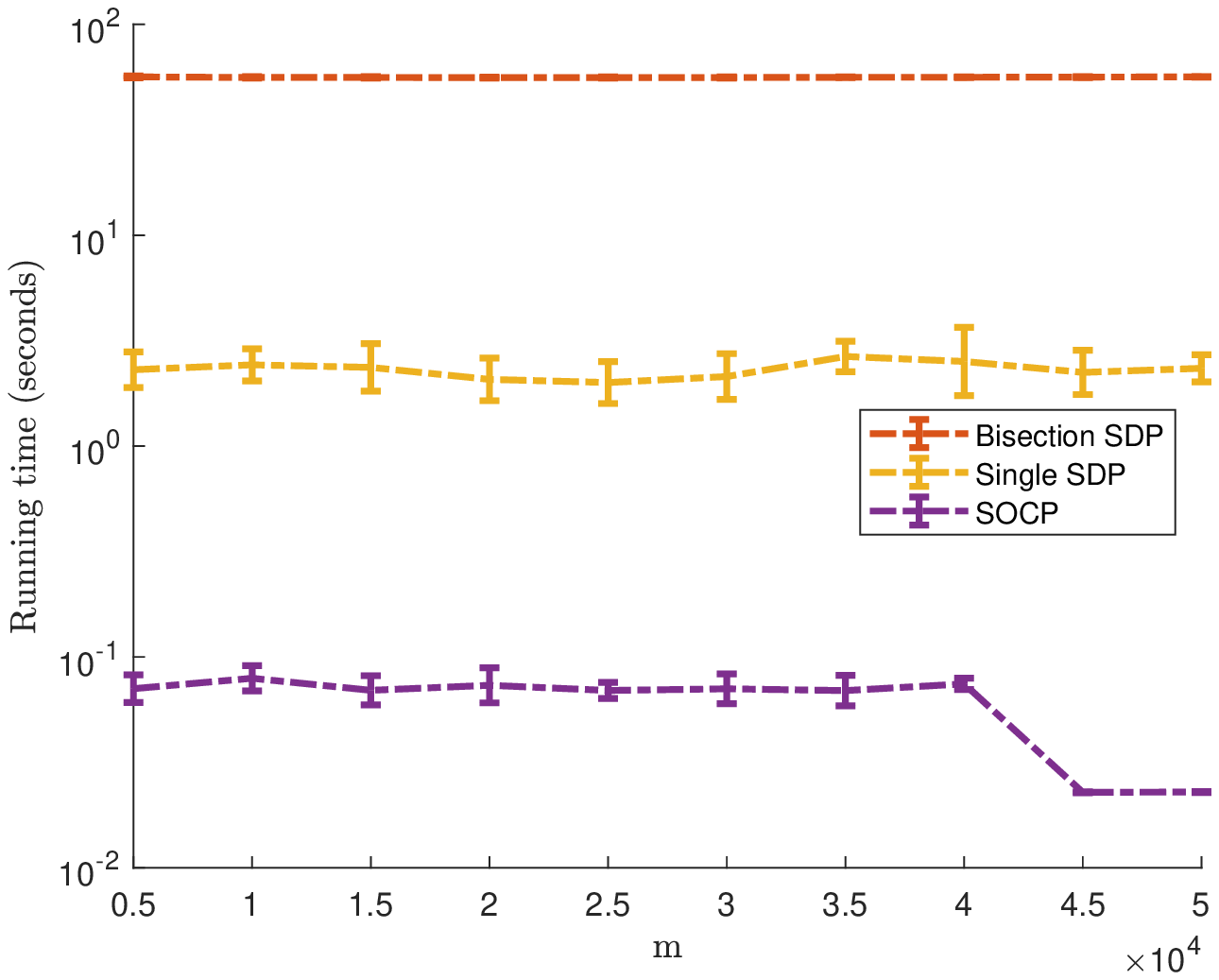} \label{fig:blogseveretime}}
\caption{Performance comparison between different algorithms on the blog dataset.  The left two plots correspond to MSE result generated  by $\mathcal A_{\rm modest}$ and $\mathcal A_{\rm severe}$, whilst the right two plots correspond to wall-clock time comparison generated  by $\mathcal A_{\rm modest}$ and $\mathcal A_{\rm severe}$.}
\label{fig:blogmsetime}
\end{figure*}



We first test our methods on the red wine dataset \cite{CORTEZ2009547}, which contains 1599 instances each with 11 features. The response is a physiochemical measurement ranged from 0 to 10, where higher score means better quality. We use the same setting as in \citet{bishop2020optimal}. The wine provider manipulates the data to achieve a higher score if the original label is smaller than some threshold $t$. The wine provider sets his target label $z$ as follows,
\begin{equation*}
z_i = \max\{y_i, t\}\label{winez}.
\end{equation*}
We consider two different providers  $\mathcal A_{\rm modest}$ with $t_{\rm modest}= 6$ and  $\mathcal A_{\rm severe}$ with  $t_{\rm severe}= 8$.

Our numerical results are reported in Figure \ref{fig:winemsetime}. From Figures \ref{fig:winemodestmse} and \ref{fig:wineseveremse}, we see that our single SDP method, our SOCP method and the bisection method achieved the best performance in  average MSE. This is not surprise as the three methods are guaranteed to solve the SPG-LS globally. Figures \ref{fig:winemodestime} and \ref{fig:wineseveretime} indicate that both single SDP and SOCP are much faster than all the other three methods. Since the dimension of SDP is rather small, our SDP method took a similar time with our SOCP method.

\subsubsection{Blog Dataset}
We next compare our algorithms on the blogfeedback dataset\footnote{https://archive.ics.uci.edu/ml/datasets/BlogFeedback} from the UCI data repository \cite{asuncion2007uci}.
It consists of 52397 data processed from raw feedback-materials collected from the Internet. Each one conveys the information of a certain session, described by 281 features. The response is the number of comments. The task for the learner, in this case, is to predict the the future comment numbers in a regression manner.


As before, we assume that the label $ z_i =\max\{ y_i + \delta,0\} $ is modified by the data provider in order to trigger a biased result. For example, consider an option guider who aims to manipulate the public expectation of a certain blog news. He is then motivated to temper the announced comment number. We assume there are two types of data providers, $\mathcal A_{\rm modest}$ with  $ \delta =-5$  and  $\mathcal A_{\rm severe}$ with $ \delta=-10 $. 
All the other hyperparameters are  the same with the wine dataset.

For this dataset,  we do not compare the  method in \citet{bruckner2011stackelberg} for time consideration. Hence we only present the comparisons of the other four methods
in  Figure \ref{fig:blogmsetime}.  Similarly, Figures \ref{fig:blogmodestmse} and \ref{fig:blogseveremse} demonstrate that our two methods achieved the best average MSE. Figures \ref{fig:blogmodestime} and \ref{fig:blogseveretime} indicate that both the single SDP and SOCP methods are much faster than the bisection method, and the SOCP approach surpasses the single SDP approach. In fact, our SOCP method takes only  about $1/50$ time of our single SDP method, while our single SDP method takes only  about $1/20$ time of the bisection method. That is, our SOCP method is 1000 times more efficient than the bisection method on this dataset.

\subsection{Synthetic Dataset}\label{sec:5.2}
To further demonstrate the efficacy of our proposed approaches in large-scale problems, we perform synthetic experiments with a high feature dimension.  The function \texttt{make\_regression} in scikit-learn~\cite{pedregosa2011scikit} is used to build artificial datasets of controlled size and complexity. In particular, we specify the noise as $0.1,$ which is the standard deviation of the Gaussian noise applied to the output $y$, and all other arguments are set as default. Our experiment  focuses on the comparison of three different methods including the bisect method, the single SDP and SOCP methods. Similar as in \citet{bishop2020optimal}, the fake input label $z_i$ is set as
\[
z_i = \max\{y_i,y_{0.25}\},
\]
where $y_{0.25}$ represents the lower quartile (25th percentile) of output $y$. More specifically, if the true  label is greater than or equal to the  threshold $y_{0.25}$, then the label would not be modified. Otherwise, the label would be set as  $y_{0.25}$. In all tests, the parameter $\gamma$ is set as 0.01. More results with $\gamma = 0.1$ can be found in the Appendix.

\begin{table}[!htbp]
        \tiny
        \centering
        \caption{Time (seconds) comparison on synthetic data: $m = 2n$}
        \begin{tabular}{rrrrrrrr}
                \toprule
                  $m$   & $n$ & bisect & sSDP & SOCP  & ratio1 & ratio2& eig \\
                \midrule
                200   & 100  & 4.356   & 0.111   & 0.043 & 101   & 3   &  0.001   \\
                1000  & 500  & 167.732 & 3.997   & 0.099 & 1702  & 41  &  0.020    \\
                2000  & 1000 & 988.675 & 45.984  & 0.178 & 5559  & 259 &  0.085  \\
                4000  & 2000 & 7877.041& 438.487 & 0.536 & 14694 & 818 &  0.441   \\
                8000  & 4000 & -       & 3127.316& 1.478 & -     & 2116&  3.349       \\
                12000 & 6000 & -       & -       & 3.079 & -     & -   &  11.245     \\
                \bottomrule
        \end{tabular}%
        \label{tab:timecompare2}
\end{table}%
\begin{table}[htbp]
        \tiny
        \centering
        \caption{Time (seconds)  comparison on synthetic data:  $m = n$}
        \begin{tabular}{rrrrrrrr}
                \toprule
                $m$   & $n$ & bisect & sSDP & SOCP  & ratio1 & ratio2& eig \\
                \midrule
                100   & 100  & 4.342   & 0.107   & 0.040 & 108   & 3    & 0.001\\
                500   & 500  & 158.304 & 4.142   & 0.072 & 2197  & 57   & 0.018\\
                1000  & 1000 & 990.151& 21.781  & 0.225 & 4408  & 97   & 0.085 \\
                2000  & 2000 & 7667.927& 201.411 & 0.586 & 13094 & 344  & 0.442\\
                4000  & 4000 & -       & 2142.952& 2.485 & -     & 862  & 3.264\\
                6000  & 6000 & -       & -       & 2.876 & -     & -    & 11.117\\
                \bottomrule
        \end{tabular}%
        \label{tab:timecompare1}
\end{table}%

\begin{table}[htbp]
        \tiny
        \centering
        \caption{ Time (seconds)  comparison on synthetic data: $m = 0.5n$}
        \begin{tabular}{rrrrrrrr}
                \toprule
                  $m$   & $n$ & bisect & sSDP & SOCP  & ratio1 & ratio2& eig\\
                \midrule
                50    & 100  & 4.146   & 0.105    & 0.047 & 87   & 2    & 0.001\\
                250   & 500  & 156.018 & 4.471    & 0.078 & 2004  & 57  & 0.021\\
                500   & 1000 & 956.343& 69.267   & 0.189 & 5047 & 366  & 0.080\\
                1000  & 2000 & 7495.735& 177.999  & 0.371 & 20217 & 480  & 0.405\\
                2000  & 4000 & -       & 1485.843 & 1.229 & -     & 1209 & 3.144\\
                3000  & 6000 & -       & 8769.430 & 2.616 & -     & 3352 & 10.436\\
                \bottomrule
        \end{tabular}%
        \label{tab:timecompare05}
\end{table}%
Tables \ref{tab:timecompare2}, \ref{tab:timecompare1} and  \ref{tab:timecompare05} summarise the comparison of wall-clock time on different scales with $m = pn,\ p \in \{0.5,1,2\}$. In these tables, ``bisect" represents the bisection method in \citet{bishop2020optimal},
``sSDP" represents our single SDP method, ``SOCP" represents our SOCP method,
``ratio1" represents the ratio of  times of the bisection method and our SOCP method, and ``ratio2" represents the ratio of  times of our single SDP method and our SOCP method. The last column ``eig" recorded the spectral decomposition time of matrix $\bar A_{11}$ in \eqref{eq:formbarA}. In the test, the algorithm would not be run in larger dimension case (denoted by ``-"), if its wall-clock time at current dimension exceeds 1800 seconds.

From the three tables, we can find that our single SDP method is consistently faster than the bisection method. The ratios in the table also demonstrate the high efficiency of our SOCP method, which can be up to 20,000+ times faster than the bisection method for case $(m,n)=(1000,2000)$. Our SOCP method is also significantly faster than our single SDP method. For example, our SOCP method took about 3 seconds for all cases with $n=6000$, while our single SDP method  took at least 8,000 seconds for the case $(m,n)=(3000,6000)$.
We also remark that the performance gap grows considerably with the problem size since both the ratios increase as the dimension increases.
Finally, we mention that, compared to the time of our single SDP method, the time of spectral decomposition in formulating our SOCP is rather small, which is about 11 seconds for $n=6000$. 

\section{Conclusion}
In this paper, we study the computation for Stackelberg equilibrium of SPG-LSs. Hidden convexity in the fractional programming formulation \eqref{pb:ori} of the SPG-LS is deeply explored. Then, we are able to reformulate the SPG-LS as a single SDP, based on the S-lemma with equality.  By using simultaneous diagonalizability
of its submatrices in the constraint, we further reformulate  our SDP into an SOCP.  We also demonstrate the optimal solution
to the SPG-LS can be recovered easily from solving our obtained SDP or SOCP.
Numerical comparisons between our single SDP and SOCP approaches with the  state of the art demonstrate the high efficiency   as well as learning accuracy of our methods for handling large-scale SPG-LSs.
We believe that our work opens up a new way for the applicability of SPG-LSs in large-scale real scenarios.


\newpage
\onecolumn
\begin{center}
{\Large \bf Supplementary Material}
\end{center}
\par\noindent\rule{\textwidth}{1pt}
\setcounter{section}{0}

\renewcommand\thesection{\Alph{section}}
\renewcommand\thesubsection{\arabic{subsection}}

\section{Recovering Solutions to the SPG-LS}
\subsection{Existence of an Optimal Solution to \eqref{pb:SDP}}
Note that $(\mu,\lambda)=(0,0)$ is a feasible solution to \eqref{pb:SDP} as $$A = \begin{smallpmatrix}X^T\\\z-\y\\-\y\end{smallpmatrix} \begin{smallpmatrix}X & \z-\y&-\y\end{smallpmatrix}\succeq0.$$
Hence, the optimal value of \eqref{pb:SDP} is bounded from above.
Next, consider the following dual problem of \eqref{pb:SDP}
\begin{equation}
\label{pb:dualSDP}
\begin{array}{ll}
\min\limits_W& \langle A,W\rangle\\
\rm s.t.&\langle B,W\rangle=1,\\
&\langle C,W\rangle=0,\\
&W\succeq0.
\end{array}
\end{equation}
Note that $\tilde W=\begin{smallpmatrix}
\frac{\gamma}{8n}I_n&&\\
&\frac{3}{8}&\frac{1}{8}\\
&\frac{1}{8}&\frac{3}{8}
\end{smallpmatrix}$, which satisfies
\[ \tilde W\succ 0,~\langle B,\tilde W\rangle=1\text{ and }\langle C,\tilde W\rangle =0,\]
is a strictly feasible solution to \eqref{pb:dualSDP}. Then, we know from weak duality that the optimal value of \eqref{pb:dualSDP} is also bounded from below, i.e., it is a finite value. Thus there exists an optimal solution to  \eqref{pb:dualSDP} (see, e.g., Theorem 1.4.2 in \citet{ben2012lectures} or Corollary 5.3.10 in \citet{Borwein2006}).

\subsection{Recovering an Optimal Solution to the SPG-LS}
\subsubsection{Recovering an Optimal Solution from the Dual Solution of \eqref{pb:SDP}}
Now let $(\mu^*,\lambda^*)$ be an optimal solution to \eqref{pb:SDP}.
From Lemma \ref{lem:fq}, we know that an optimal solution to \eqref{pb:ori} can be recovered from an optimal solution of the following QCQP
\begin{equation}
\label{pb:gtrs}
\min~ g(\w,\alpha) {\quad \rm s.t.~} \quad \frac{1}{\gamma}\w^T\w=\alpha,
\end{equation}
where  $g(\w,\alpha)=f(\w,\alpha)-\mu^*(1+\alpha)^2$.
By relaxing $\begin{pmatrix}\w\\\alpha\\1 \end{pmatrix}\begin{pmatrix}\w^{T}&\alpha&1 \end{pmatrix}\succeq0$  to $W\succeq0$ and $W_{n+2,n+2}=1$, we have the following standard SDP relaxation of the problem \eqref{pb:gtrs},
\begin{equation}
\label{pb:sdr}
\begin{array}{ll}
\min\limits_W& \langle A-\mu^*B,W\rangle\\
\rm s.t.&\langle C,W\rangle =0,\\
&W_{n+2,n+2}=1,\\
&W\succeq0.
\end{array}
\end{equation}
Note that the dual problem of \eqref{pb:sdr} is
\begin{align}
\label{pb:dualsdr}
\begin{array}{lll}
&\sup\limits_{\lambda,\tau}& \tau\\
&\rm s.t.&A-\mu^{*} B+\lambda C-\tau E\succeq0,
\end{array}
\end{align}
where $E=\Diag({\bm 0}_{n+1},1)$. From Lemma \ref{lem:fq}, we know that the objective value of \eqref{pb:gtrs} is exactly 0 and thus the optimal value of its SDP relaxation \eqref{pb:sdr} is also non-positive. Then, weak duality implies that the optimal value of \eqref{pb:dualsdr} is non-positive. Since $(\lambda,\tau)=(\lambda^*,0)$ is a feasible solution to  \eqref{pb:dualsdr}, it holds that 0 is the optimal value of  \eqref{pb:dualsdr}.
Moreover, we know that the optimal value of \eqref{pb:sdr} is 0.


For problem \eqref{pb:sdr}, one may check that $\tilde W=\begin{smallpmatrix}
\frac{\gamma}{2n}I_n&&\\
&1&\frac{1}{2}\\
&\frac{1}{2}&1
\end{smallpmatrix}$ is a strictly feasible solution.
We assume \eqref{pb:dualsdr} is also strictly feasible.
From Theorem 1.4.2 or Section 3.1.1.2 in \citet{ben2012lectures}, we know that both \eqref{pb:sdr} and \eqref{pb:dualsdr} have optimal solutions and any primal dual solution pairs satisfy the KKT optimality condition.
Now suppose $W^*$ is an optimal solution to \eqref{pb:sdr}. Since $(\lambda,\tau)=(\lambda^*,0)$ is an optimal solution to \eqref{pb:dualsdr}, we have
\begin{equation}
\label{eq:KKTsdr}
\begin{array}{l}
\langle C,W^*\rangle=0,\\
W^*_{n+2,n+2}=1,\\
W^*\succeq 0, \\
\langle A-\mu^{*} B+\lambda^* C,W^* \rangle  =0.
\end{array}
\end{equation}

Next we show such a $W^*$ can be recovered from a dual solution of SDP  \eqref{pb:SDP}.
Recall
\[
\tilde B=\begin{pmatrix}
{\bm 0}_{n+1}&\\
&4
\end{pmatrix}, ~~
\tilde C=\begin{pmatrix}
\frac{1}{\gamma}I_{n+1}&\\
&-1
\end{pmatrix}\]
By setting $\mu=-1$ and $\lambda=1$, we have
$$\tilde A-\mu \tilde B+\lambda \tilde C=\tilde A+ \begin{pmatrix}
\frac{1}{\gamma}I_{n+1} &0\\
0&3\\
\end{pmatrix} \succ0,$$
which is equivalent to $A-\mu B+\lambda C\succ0$. Thus we see that there exists a strictly feasible solution for \eqref{pb:SDP}.
Since both \eqref{pb:SDP} and its dual \eqref{pb:dualSDP} are strictly feasible and $(\mu^*, \lambda^*)$ solves \eqref{pb:SDP}, we know by \citet{ben2012lectures} that there exists an optimal solution $\hat W$ to  \eqref{pb:dualSDP}, which satisfies
\[
\langle B,\hat W\rangle=1,~\langle C,\hat W\rangle=0,~\hat W\succeq0~\text{and}~ \langle A-\mu^{*} B+\lambda^{*} C,\hat W\rangle=0.
\]
We assume $\hat W_{n+2,n+2}\neq0$\footnote{One can expect that this is always the case in real applications. Indeed, one may verify from the complementary slackness condition that $\hat W=\Diag({\bm 0}_n,1,0)$ if $\hat W_{n+2,n+2}=0$, and consequently we must have $X^T(\y - \z) = 0$. However, this condition holds with probability $0$ under the assumption that $\y$ and $\z$ generated from some reasonable distribution.}.
It is easy to verify that  $\bar W=\hat W/\hat W_{n+2,n+2}$  is an optimal solution to \eqref{pb:sdr} as $\bar W$ is feasible and, together with the dual solution $(\lambda,\tau)=(\lambda^*,0)$, satisfies \eqref{eq:KKTsdr}.

Now we will introduce the well known rank-1 decomposition for a positive semidefinite matrix in \citet{sturm2003cones} to obtain an optimal solution for \eqref{pb:gtrs}.
\begin{lemma}[Proposition 4 in \citet{sturm2003cones}]
\label{lem:rank}
Let $X$ be a positive semidefinite matrix of rank $r$ in $\R^{m\times m}$. Let $G$ be a given matrix. Then $\langle G,X\rangle=0$ if and only if there exist $\p_i\in\R^m$, $i=1,\ldots,r$, such that
\begin{equation}
\label{eq:rank}
X=\sum_{i=1}^r\p_i\p_i^T\text{ and }\p_i^TG\p_i=0\quad \text{for all }~i=1,2,\ldots,r.
\end{equation}
\end{lemma}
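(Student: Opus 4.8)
The plan is to prove the two implications separately, with the forward (``if'') direction being an immediate trace computation and the reverse (``only if'') direction being the substantive claim, which I would establish by induction on the rank $r$ using a planar rotation argument. For the easy direction, suppose $X=\sum_{i=1}^r \p_i\p_i^T$ with $\p_i^T G\p_i=0$ for every $i$. Since $X$ is symmetric, $\langle G,X\rangle=\mathrm{tr}(G^T X)=\mathrm{tr}(GX)$, and each summand satisfies $\mathrm{tr}(G\p_i\p_i^T)=\p_i^T G\p_i=0$, whence $\langle G,X\rangle=0$. Before attacking the converse I would first note that both $\langle G,X\rangle$ (for symmetric $X$) and every quadratic form $\p^T G\p$ depend only on the symmetric part $(G+G^T)/2$ of $G$, so I may assume without loss of generality that $G$ is symmetric.

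For the converse I would argue by induction on $r=\mathrm{rank}(X)$. Write a spectral decomposition $X=\sum_{j=1}^r\q_j\q_j^T$ with $\q_j=\sqrt{\lambda_j}\,u_j$, where the $u_j$ are orthonormal eigenvectors for the nonzero eigenvalues $\lambda_j>0$; in particular the $\q_j$ are linearly independent. The hypothesis $\langle G,X\rangle=0$ reads $\sum_{j=1}^r\q_j^T G\q_j=0$. If every term $\q_j^T G\q_j$ already vanishes, the decomposition $\p_j=\q_j$ works and we are done (this also covers the base case $r=1$). Otherwise, since the terms sum to zero but are not all zero, at least one is strictly positive and at least one strictly negative; after relabeling, say $\q_1^T G\q_1>0$ and $\q_2^T G\q_2<0$.

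The key step is a planar rotation in $\mathrm{span}(\q_1,\q_2)$. Setting $\p_1(\theta)=\cos\theta\,\q_1+\sin\theta\,\q_2$ and $\p_2(\theta)=-\sin\theta\,\q_1+\cos\theta\,\q_2$, a direct expansion gives the orthogonal-invariance identity $\p_1(\theta)\p_1(\theta)^T+\p_2(\theta)\p_2(\theta)^T=\q_1\q_1^T+\q_2\q_2^T$ for all $\theta$. The scalar $\phi(\theta):=\p_1(\theta)^T G\p_1(\theta)$ is continuous with $\phi(0)=\q_1^T G\q_1>0$ and $\phi(\pi/2)=\q_2^T G\q_2<0$, so by the intermediate value theorem there is $\theta^\ast\in(0,\pi/2)$ with $\phi(\theta^\ast)=0$. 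Writing $\p_1:=\p_1(\theta^\ast)$ and $\p_2:=\p_2(\theta^\ast)$, we get $X=\p_1\p_1^T+X'$ with $X':=\p_2\p_2^T+\sum_{j\ge 3}\q_j\q_j^T$, and $\langle G,X'\rangle=\langle G,X\rangle-\p_1^T G\p_1=0$. Since the $\q_j$ are independent and $\p_2\ne 0$ (a nontrivial combination of independent vectors), the vectors $\p_2,\q_3,\dots,\q_r$ are linearly independent, so $X'$ is positive semidefinite of rank exactly $r-1$. Applying the induction hypothesis to $X'$ yields $X'=\sum_{i=1}^{r-1}\p_i'\,\p_i'^T$ with $\p_i'^T G\p_i'=0$; together with $\p_1$ this gives the desired $r$-term decomposition of $X$.

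The main obstacle is making the inductive bookkeeping watertight. One must guarantee at each step that a positive and a negative term coexist (which follows from the zero-sum condition together with the terms being not all zero) and, crucially, that peeling off $\p_1$ leaves a residual $X'$ of rank \emph{exactly} $r-1$, rather than dropping the rank by two or leaving it at $r$. This is precisely where the linear independence of the $\q_j$ coming from the spectral decomposition, and the non-degeneracy $\p_2\ne 0$ of the rotated vector, are used. Everything else reduces to the routine trace identity and the continuity/intermediate-value argument.
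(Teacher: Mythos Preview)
Your proof is correct. The paper does not give a self-contained proof of this lemma (it is cited from Sturm--Zhang), but it does present a constructive procedure (Algorithm~\ref{alg:rank}) that embodies essentially the same argument: start from a rank-$r$ decomposition, and as long as two terms have $G$-values of opposite sign, replace them by a planar combination that zeroes one of them. The only cosmetic difference is that you obtain the zeroing angle via a rotation $\p_1(\theta)=\cos\theta\,\q_1+\sin\theta\,\q_2$ and the intermediate value theorem, whereas the algorithmic version solves the quadratic $(\q_1^T G\q_1)t^2+2(\q_1^T G\q_2)t+\q_2^T G\q_2=0$ directly for the mixing coefficient (its discriminant is nonnegative precisely because the two endpoint values have opposite sign, so this is the same IVT fact in closed form). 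Your inductive peeling and the algorithm's while-loop are equivalent bookkeeping; the paper's remark that the loop terminates in at most $r$ steps corresponds to your rank-drop verification. One small tightening: when you argue that $\p_2,\q_3,\dots,\q_r$ are independent, the fact that $\p_2\ne 0$ alone is not enough; you should also note $\p_2\in\mathrm{span}(\q_1,\q_2)$, which meets $\mathrm{span}(\q_3,\dots,\q_r)$ only at $0$ since the $\q_j$ are independent.
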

\begin{algorithm}[!hptb]
\caption{Decomposition of $X$ satisfying  \eqref{eq:rank}}
\label{alg:rank}
\begin{algorithmic}[1]
\STATE {\bfseries Input:} positive semidefinite matrix  $X$ with rank $r$ in $\R^{m\times m}$,  a given matrix $G$ such that $\langle G,X\rangle=0$
\STATE set $\bar A,\bar B,\bar C$ as in \eqref{eq:formbarA}, \eqref{eq:formbarB}, \eqref{eq:formbarC}
\STATE do spectral decomposition to matrix $\bar A_{11}$ in \eqref{eq:formbarA} with $\bar{A}_{11} = HDH^{T}$
\WHILE{there exist $\p_i , \p_j$ such that $\p_i^TG\p_i>0,\p_j^TG\p_j<0$} \STATE compute a root $t$ of the quadratic equation
\[\p_i^TG\p_i t^2 + 2\p_i^TG\p_jt+\p_j^TG\p_j=0\]
\STATE set $\tilde \p_i=\frac{1}{\sqrt{t^2+1}}(t\p_i+\p_j)$,  $\tilde \p_{j}=\frac{1}{\sqrt{t^2+1}}(t\p_j+\p_i)$ \STATE set $\p_i=\tilde \p_i$, $\p_j=\tilde \p_j$
\ENDWHILE
\STATE return $X=\sum_{i=1}^r\p_i\p_i^T$
\end{algorithmic}
\end{algorithm}
We next adapt Algorithm \ref{alg:rank} for computing a decomposition for $X$ in Lemma \ref{lem:rank}, which is a variant of Algorithm 2 in \citet{hazan2016linear}.
Similar to Lemma 1 in  \citet{hazan2016linear},  the while loop ends in at most $r$ steps.

Using Algorithm \ref{lem:rank} and setting $G = C$, we obtain a decomposition  $W^*=\sum_{i=1}^r\p_i\p_i^T$ satisfying
$$ \p_i^T C \p_i =0 \quad \mbox{ for all }~ i= 1,\ldots, r.$$
Since $\langle A-\mu^*B+\lambda^* C,W^*\rangle=0$, we  have $\sum_{i=1}^{r}\p_i^T( A-\mu^*B+\lambda^* C)\p_i^T=0$. 
This, together with the positive semidefiniteness of $ A-\mu^*B+\lambda^* C$, implies
$$\p_i^T( A-\mu^*B+\lambda^* C)\p_i =0\quad\text{for all  }~i=1,\ldots,r.$$
Then, we know that $\p_i^T( A-\mu^*B)\p_i =0$ for all $i=1,\ldots, r$.
Since $W_{n+2,n+2}^*=1$, there exists some $j\in [r]$ such that  $(\p_j)_{n+2}\neq0$.
Let $\hat \w= \frac{(\p_j)_{1:n+1}}{(\p_j)_{n+2}}$, then we obtain a rank-1 solution
$$ \hat W = \begin{pmatrix}\hat \w\\1 \end{pmatrix}\begin{pmatrix}\hat \w^{T}&1 \end{pmatrix}$$
to problem \eqref{pb:sdr}.
Let $\w^* = \hat \w_{1:n} \in \R^n$ and $\alpha^* = \hat \w_{n+1} \in \R$. It is not difficult to verify that $\alpha^* = \w^{*T} \w^*/\gamma$ and $g(\w, \alpha) =0$, i.e., $(\w, \alpha) $ solves problem  \eqref{pb:gtrs}. Then,  we know from  Lemma \ref{lem:fq} that $\w^*$ is an optimal solution to \eqref{pb:ori}.

We remark here that  in our recovering phase, no additional SDP needs to be solved. Indeed, in most interior point methods based solvers for SDPs (e.g., MOSEK \cite{aps2021mosek}, SDPT3 \cite{toh1999sdpt3}), both primal and dual solutions of SDPs are computed simultaneously.
Hence, to obtain a solution $\hat W$ to \eqref{pb:dualSDP}, we only needs to solve either \eqref{pb:SDP} or \eqref{pb:dualSDP}.

\subsubsection{Recovering an Optimal Solution from the Solution of SOCP \eqref{pb:SOCP}}

Next we show the correctness of step 8 in Algorithm \ref{alg:socp}.
The analysis in the previous subsection reveals that there exists a rank-1 solution $\hat W$ to \eqref{pb:sdr}, and $(\lambda,\tau)=(\lambda^*,0)$ is a solution to the dual problem
\eqref{pb:dualsdr}. Given the constraint $W_{n+2,n+2}=1$, we may assume $\hat W=\begin{smallpmatrix}
        \hat \w \\
        1
        \end{smallpmatrix}\begin{smallpmatrix}
        \hat \w^T&
        1
        \end{smallpmatrix}$.
Then from \eqref{eq:KKTsdr}, we arrive at the following sufficient and necessary condition in terms of $\hat \w$,
\begin{equation*}
\begin{array}{l}
\langle C,\begin{smallpmatrix}
        \hat \w \\
        1
        \end{smallpmatrix}\begin{smallpmatrix}
        \hat \w^T&
        1
        \end{smallpmatrix}\rangle=0,\\
\langle A-\mu^{*} B+\lambda^{*} C,\begin{smallpmatrix}
        \hat \w \\
        1
        \end{smallpmatrix}\begin{smallpmatrix}
        \hat \w^T&
        1
        \end{smallpmatrix} \rangle  =0,
\end{array}
\end{equation*}
Let $\hat \w= \begin{smallpmatrix}
        \w \\
        \alpha
        \end{smallpmatrix}. $
Then, the first equation implies $\frac{1}{\gamma}\w^T\w=\alpha$,
and the second equation, together with the positive semidefiniteness of  $A-\mu^{*} B+\lambda^{*} C$, implies  \[(A-\mu^{*} B+\lambda^{*} C)\begin{smallpmatrix}
        \w \\
        \alpha\\
        1
        \end{smallpmatrix}=0.\]
Since the existence of such a $(\w,\alpha)$ is guaranteed from the results in the previous subsection, when the linear system has a unique solution\footnote{A sufficient condition to guarantee this is that the matrix $(A-\mu^{*} B+\lambda^{*} C)$ is of rank $n+1$, which is observed in all our numerical tests.}, it suffices to solve the linear system solely.



\newpage
\section{Additional Experiments}
In this section, we compare our single SDP and SOCP methods with existing algorithms for additional real and synthetic datasets. { Besides, to validate the robustness of our model, we add experiments with random noises in the target label as well.}

\subsection{Real World Dataset}
We illustrate the accuracy and efficiency of our methods on two additional real world datasets, the insurance dataset\footnote{https://www.kaggle.com/mirichoi0218/insurance/metadata}
and the residential building dataset\footnote{https://archive.ics.uci.edu/ml/datasets/Residential+Building+Data+Set}.
\subsubsection{The Insurance Dataset}
The insurance dataset consists of 1338 instances with 7 features, each regarding to certain information of an individual such as age, region and smoking status. For the test purpose, we transform the categorical features  into a one-hot vector.
Similar as in \citet{bishop2020optimal}, we consider the scenario with an insurer and multiple individuals.
The insurer collects data-form information from the individuals to predict future insurance quote while the latter provide fake data in order to make the quote lower.
Corresponding to this situation, we define the individual's desired outcome as
$$z_i = \max\{y_i +\delta,0\}, $$
where $\delta=-100$ in the modest case and $\delta=-300$ in the severe case.

All the hyperparameters are the same as those used in Section \ref{sec:5.1}.
The MSE and the computational time comparisons are illustrated in Figure \ref{insurancemsetime}.   As one can observe, the bisection method, our single SDP and our SOCP methods achieved the best performance in terms of MSE.
On average, our SOCP method, for both the modest and severe cases, is about 40 times faster than the bisection method, and is slightly better than our single SDP method.
These results again verify the accuracy and efficacy of our methods.


\begin{figure}[htbp]
\centering
\subfigure{
\includegraphics[scale=0.27]{./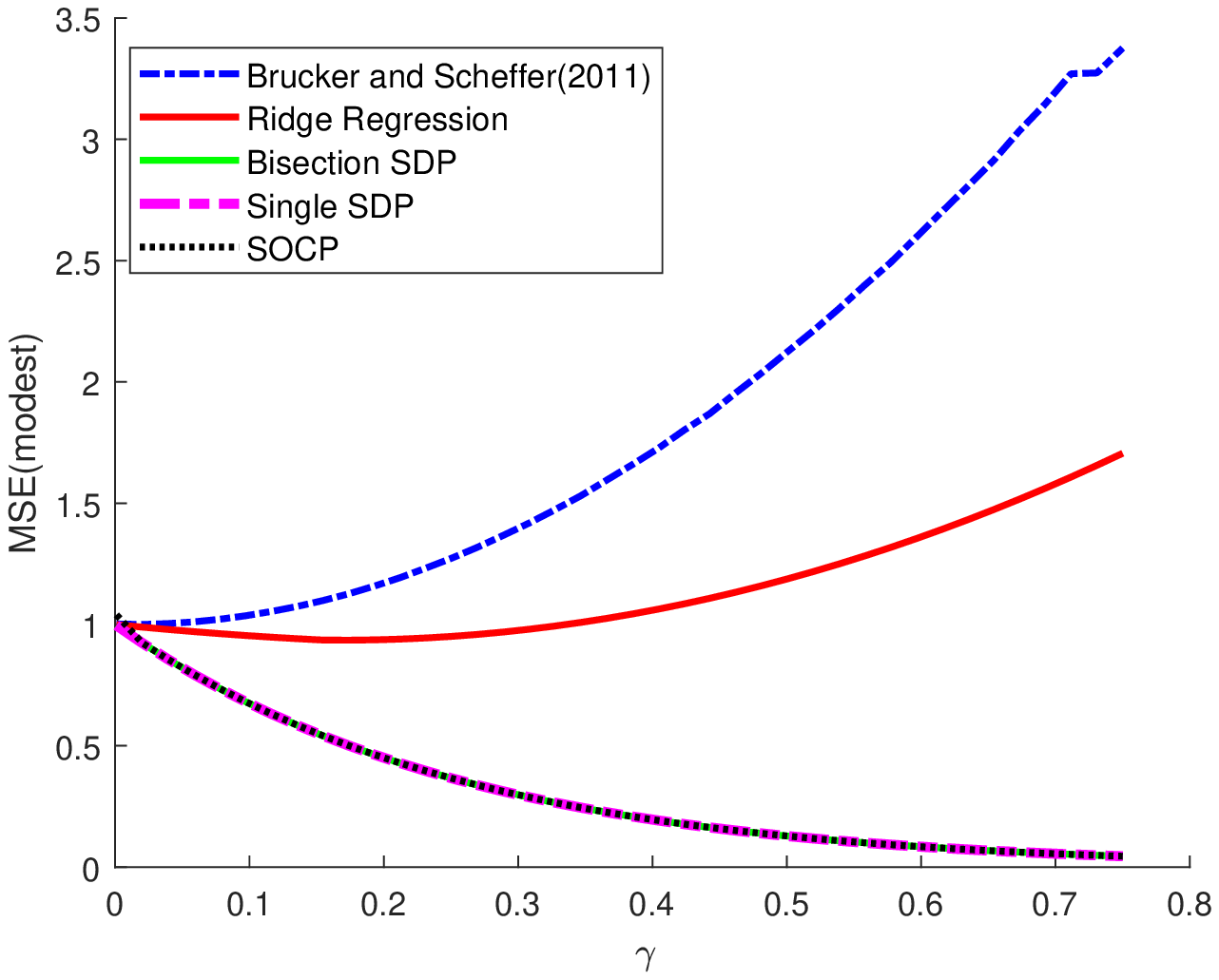} \label{fig:insurmodestmse}
}
\subfigure{
\includegraphics[scale=0.27]{./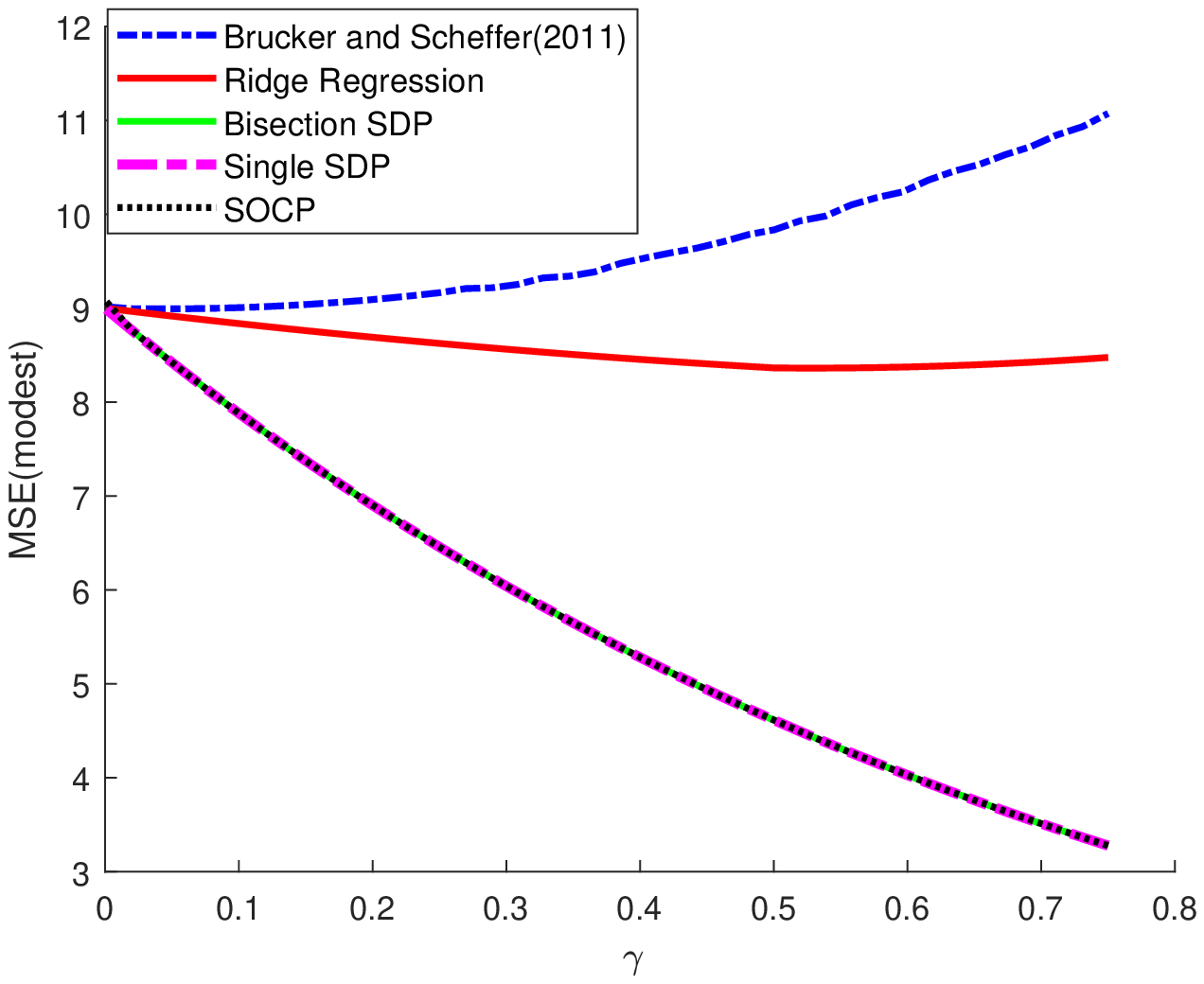} \label{fig:insurseveremse}
}
\subfigure{
\includegraphics[scale=0.27]{./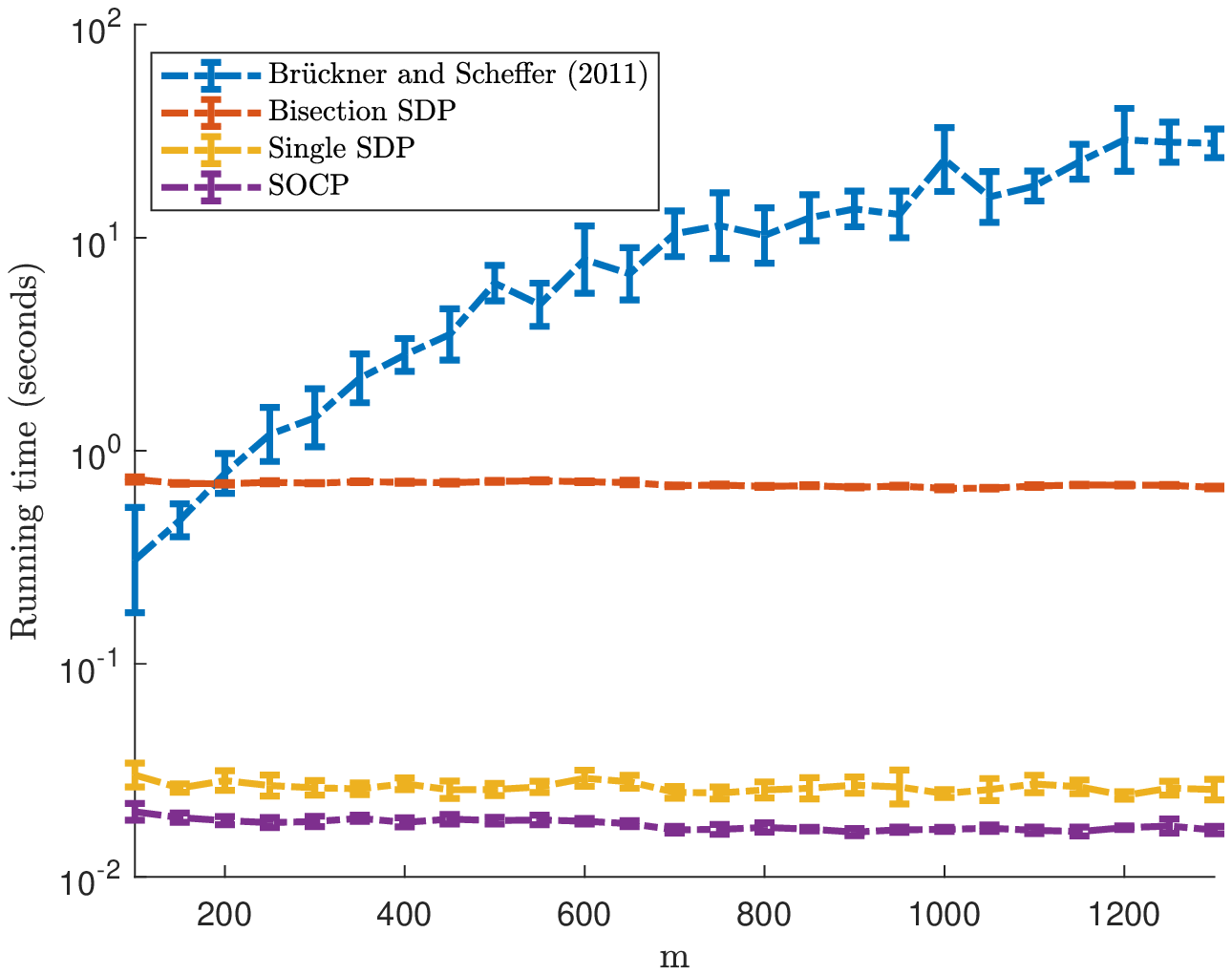} \label{fig:insurmodestime}
}
\subfigure{
\includegraphics[scale=0.27]{./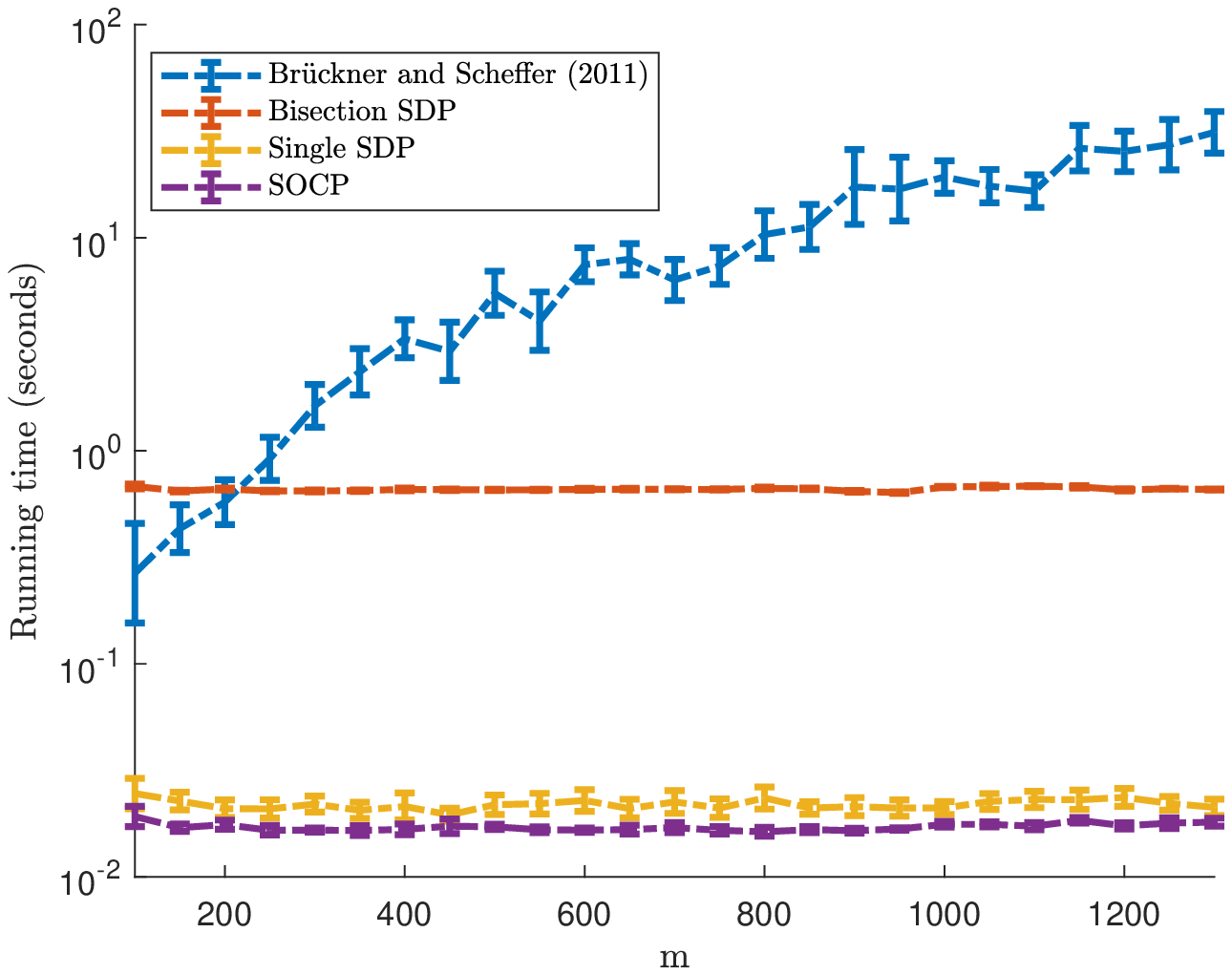} \label{fig:insurseveretime}}
\caption{Performance comparison between different algorithms on the insurance dataset.  The left two plots correspond to MSE result generated  by $\mathcal A_{\rm modest}$ and $\mathcal A_{\rm severe}$, whilst the right two plots correspond to wall-clock time comparison generated  by $\mathcal A_{\rm modest}$ and $\mathcal A_{\rm severe}$.}
\label{insurancemsetime}
\end{figure}

\subsubsection{The Residential Building Dataset}
The residential building data set consists of 372 instances each with 107 features. The response variable is chosen to be the actual sales prices. We consider a scenario in which sellers want to sell the buildings at a higher price and buyers try to predict fair prices. We define the seller's desired outcome
$$
    z_i = y_i + \delta, 
$$
where  $\delta = 20$ for $\mathcal A_{\rm modest}$ and $\delta = 40$ for  $\mathcal A_{\rm severe}$.
All the hyperparameters are the same as those used in Section \ref{sec:5.1}.

The resulted MSE of the experiments and  the running time comparison are shown in Figure \ref{buildingmsetime}. Similar as in the previous case, our algorithms outperform other approaches in terms of the MSE and running time. On average, our single SDP method is about 25  times (30 times, respectively) faster than the bisection in the modest case (the severe case, respectively), while our SOCP method is about 500  times (550 times, respectively) faster than the bisection in the modest case (the severe case, respectively).  As for the MSE, we observe that for both types of data providers, the predictions made by our algorithms are much more accurate than the ridge regression, and the same accurate as the bisection method.

\begin{figure}[htbp]
\centering
\subfigure{
\includegraphics[scale=0.27]{./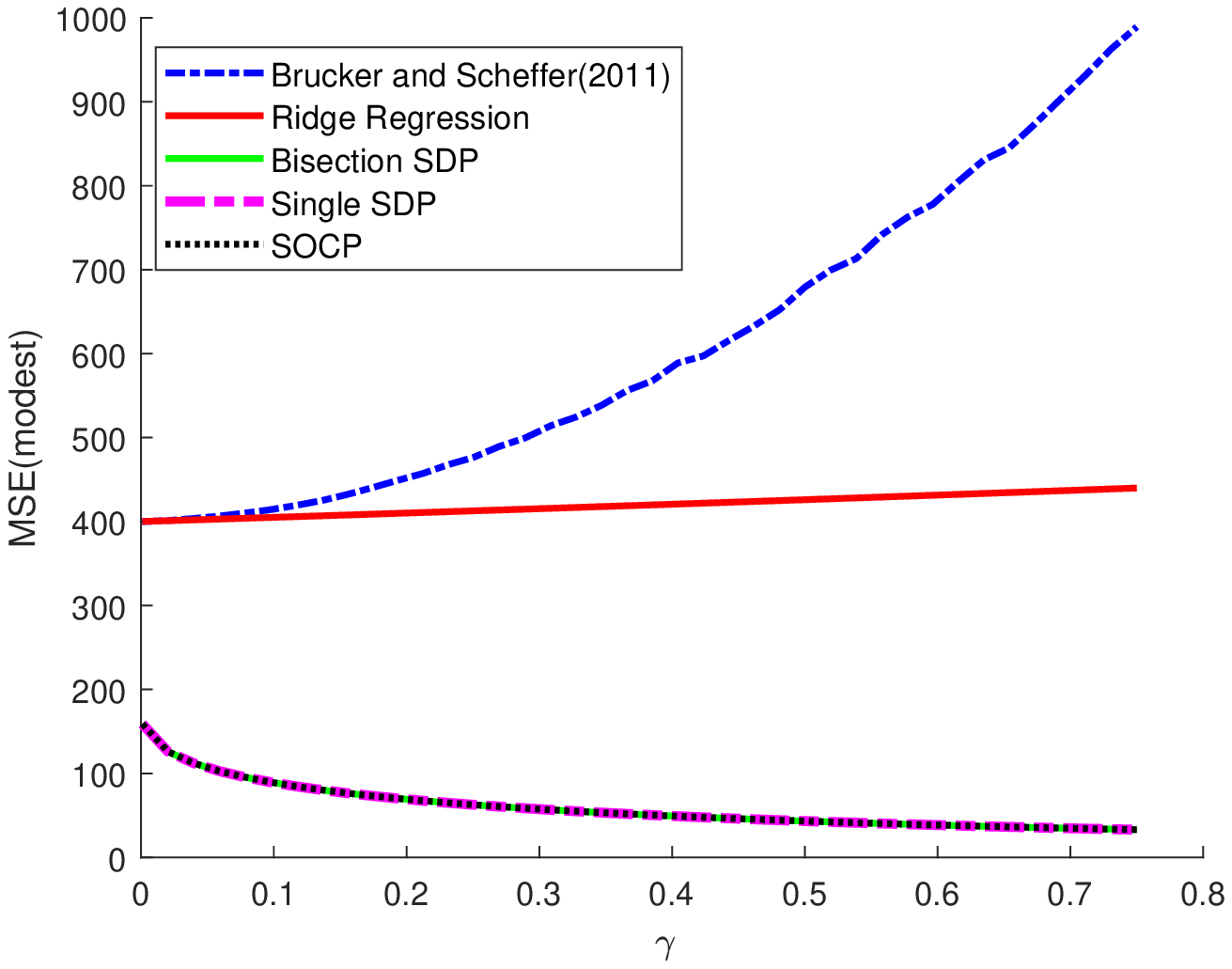} \label{fig:buildmodestmse}
}
\subfigure{
\includegraphics[scale=0.27]{./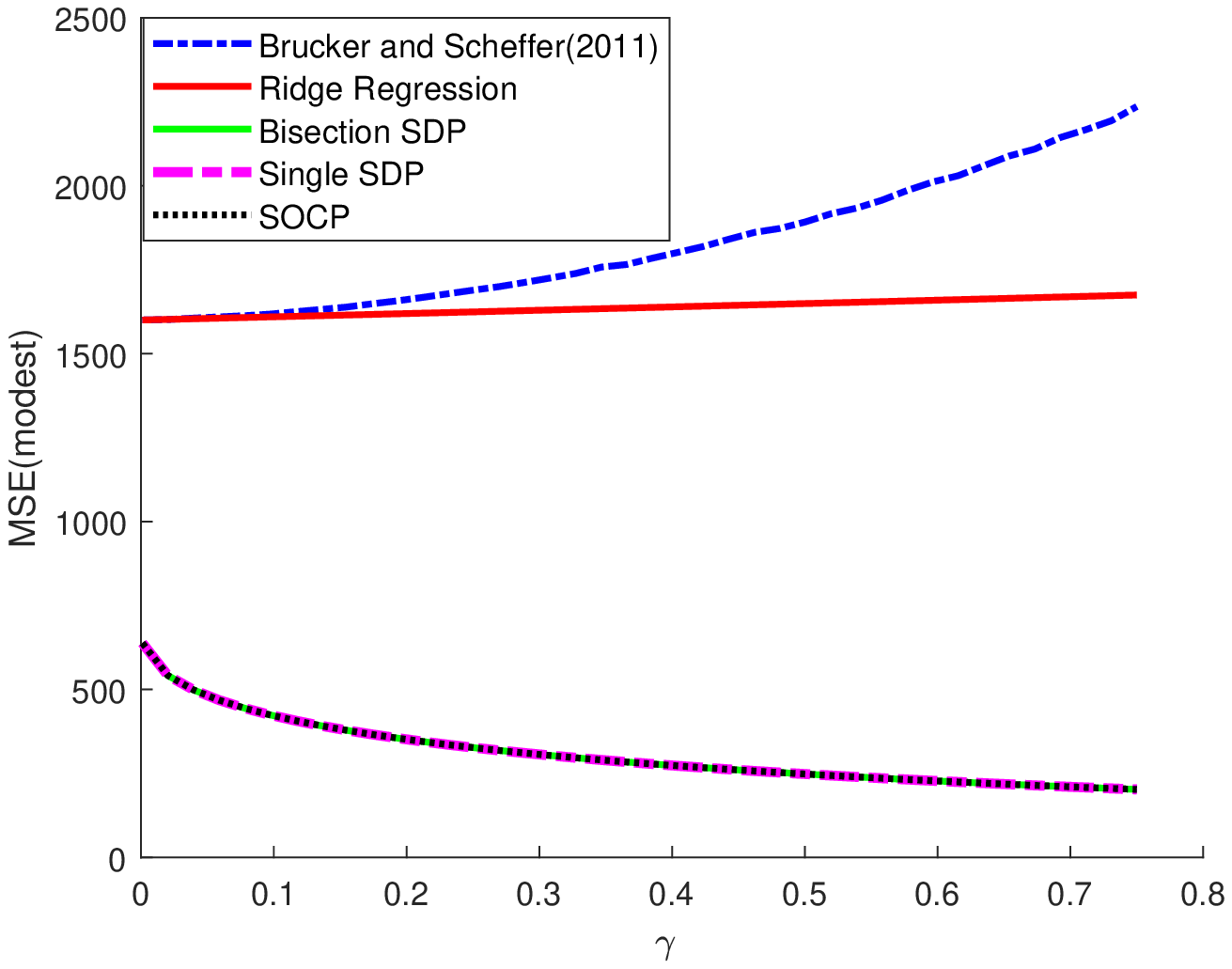} \label{fig:buildseveremse}
}
\subfigure{
\includegraphics[scale=0.27]{./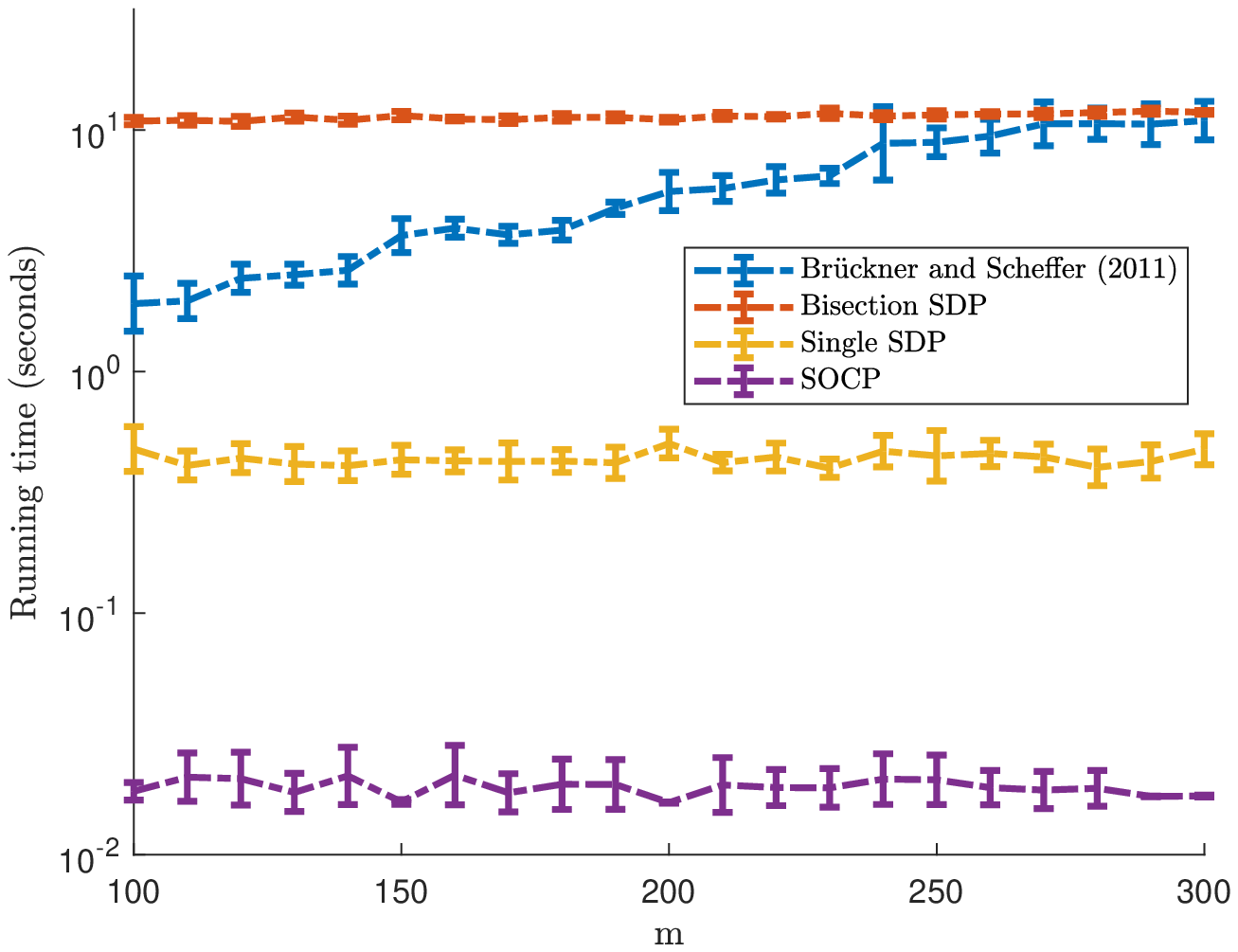} \label{fig:buildmodestime}
}
\subfigure{
\includegraphics[scale=0.27]{./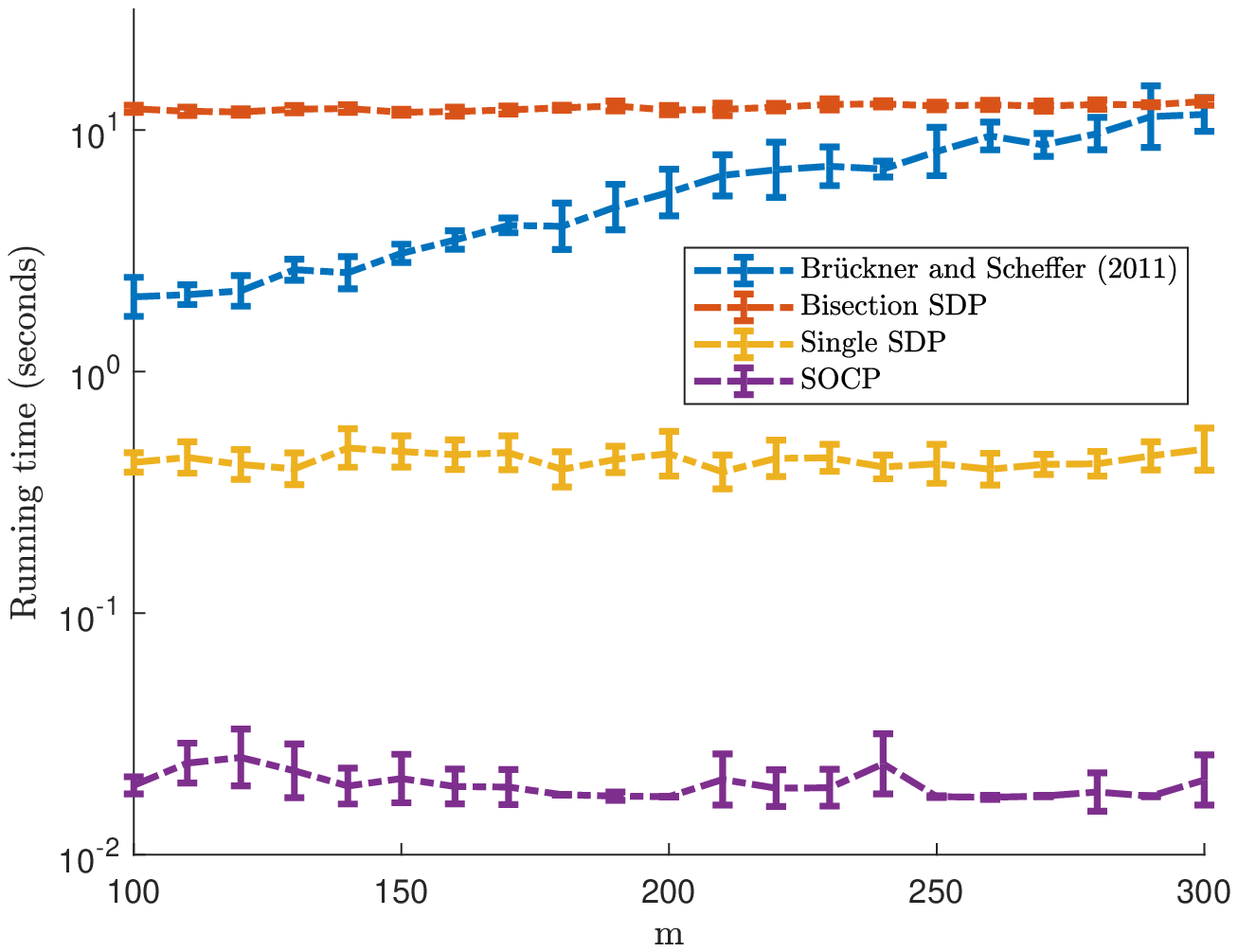} \label{fig:buildseveretime}}
\caption{Performance comparison between different algorithms on the building dataset.  The left two plots correspond to MSE result generated  by $\mathcal A_{\rm modest}$ and $\mathcal A_{\rm severe}$, whilst the right two plots correspond to wall-clock time comparison generated  by $\mathcal A_{\rm modest}$ and $\mathcal A_{\rm severe}$.}
\label{buildingmsetime}
\end{figure}

\subsection{Synthetic Dataset}

%

To further demonstrate the efficiency of our approaches, we perform experiments with $\gamma = 0.1$ on more synthetic data sets. All the other settings are the same as in Section \ref{sec:5.2}.

From Tables \ref{tab:timecompare2_ap}-\ref{tab:timecompare05_ap}, the numerical experiments demonstrated the superiority of our SOCP method over others, as observed in Section \ref{sec:5.2}.
When the dimension of $n$ is small, both single SDP and SOCP methods are efficient. However, when $n$ increases, our SOCP method performs much better than the single SDP method.
Compared with the bisection method in the case $(m,n)=(1000,2000)$, our SOCP can be up to 17,000+ times faster. Moreover,
our SOCP method took less than 3 seconds for all cases with $n = 6000$, while our single SDP took more than 10,000 seconds for the case $(m,n) = (3000,6000)$. We also remark that the performance gap grows considerably with the problem size since both the ratios increase as the dimension increases (except for ratio2 of the instance $(m, n) = (4000, 2000)$). Finally, the time of spectral decomposition in formulating our SOCP is quite small, which is less than 11 seconds for $n = 6000$.
\begin{table}[!htbp]
        \centering
        \caption{Time (seconds) comparison on synthetic data: $m = 2n$}
        \begin{tabular}{rrrrrrrr}
                \toprule
                  $m$   & $n$ & bisect & sSDP & SOCP  & ratio1 & ratio2& eig \\
                \midrule
                200   & 100  & 4.856   & 0.148   & 0.042 & 117   & 4   &  0.001   \\
                1000  & 500  & 175.010 & 3.854   & 0.112 & 1565  & 34  &  0.020    \\
                2000  & 1000 & 1166.041& 62.065  & 0.154 & 7559  & 409 &  0.084  \\
                4000  & 2000 & 9268.016& 183.295 & 0.556 & 16683 & 330 &  0.455   \\
                8000  & 4000 & -       & 2372.122& 1.420 & -     & 1670&  3.330       \\
                12000 & 6000 & -       & -       & 2.783 & -     & -   &  10.943     \\
                \bottomrule
        \end{tabular}%
        \label{tab:timecompare2_ap}
\end{table}%
\begin{table}[htbp]
        \begin{center}
        \caption{Time (seconds)  comparison on synthetic data:  $m = n$}
        \begin{tabular}{rrrrrrrr}
                \toprule
                $m$   & $n$ & bisect & sSDP & SOCP  & ratio1 & ratio2& eig \\
                \midrule
                100   & 100  & 4.885   & 0.127   & 0.024 & 200   & 5    & 0.001\\
                500   & 500  & 173.118 & 4.408   & 0.046 & 3798  & 97   & 0.019\\
                1000  & 1000 & 1130.008 & 47.321  & 0.173 & 6542  & 274   & 0.083 \\
                2000  & 2000 & 8547.944& 334.814 & 0.476 & 17955 & 703  & 0.460\\
                4000  & 4000 & -       & 2547.903& 1.588 & -     & 1604  & 3.319\\
                6000  & 6000 & -       & -       & 2.697 & -     & -    & 10.880\\
                \bottomrule
        \end{tabular}%
        \label{tab:timecompare1_ap}
        \end{center}
\end{table}%

\begin{table}[htbp]
        \begin{center}
        \caption{ Time (seconds)  comparison on synthetic data: $m = 0.5n$}
        \begin{tabular}{rrrrrrrr}
                \toprule
                  $m$   & $n$ & bisect & sSDP & SOCP  & ratio1 & ratio2& eig\\
                \midrule
                50    & 100  & 4.571   & 0.131    & 0.038 & 121   & 3    & 0.001\\
                250   & 500  & 167.787 & 8.960    & 0.119 & 1411  & 75  & 0.020\\
                500   & 1000 & 1039.244& 37.309   & 0.135 & 7702 & 277  & 0.073\\
                1000  & 2000 & 8397.725& 296.672  & 0.523 & 16052 & 567  & 0.378\\
                2000  & 4000 & -       & 1652.523 & 1.518 & -     & 1088 & 3.121\\
                3000  & 6000 & -       & 10026.490& 2.550 & -     & 3932 & 10.340\\
                \bottomrule
        \end{tabular}%
        \label{tab:timecompare05_ap}
        \end{center}
\end{table}%
\subsection{Random Noise}
To see how the function with random noises can affect the performance, we add the following experiments in wine dataset. Specifically, we add Guaussian noises to the target labels, i.e., $t_i=t+w_i$ for $w_i\sim N(0,\sigma^2)$.
We also do experiments with the truncation threshold { randomly changing in some interval}, i.e., $t_i=t+w_i$ for $w_i$ uniformly sampled in some interval centered at the origin. The MSE results of different algorithms in wine dataset generated by $\mathcal A_{\rm modest}$ and $\mathcal A_{\rm severe}$ are showed in Figure \ref{fig:noisewinemodest} and \ref{fig:noisewinesevere} respectively. The figures indicate that the model performance varies when the truncation threshold changes with respect to a Gaussian noise  $N(0,0.5^2)$ (truncated back to the interval $[y_i,10]$) or a uniform noise in $[t-1,t+1]$.
In all cases, the three global methods { (i.e., the Bisection, single SDP and SOCP methods)} achieve the best MSEs.
These results verify the robustness of our single SDP and SOCP model. The comparisons of runtime
are not presented here as they are similar to those presented
in the current paper.

\begin{figure}[htbp]
\centering     
\subfigure{\includegraphics[width=0.32\textwidth]{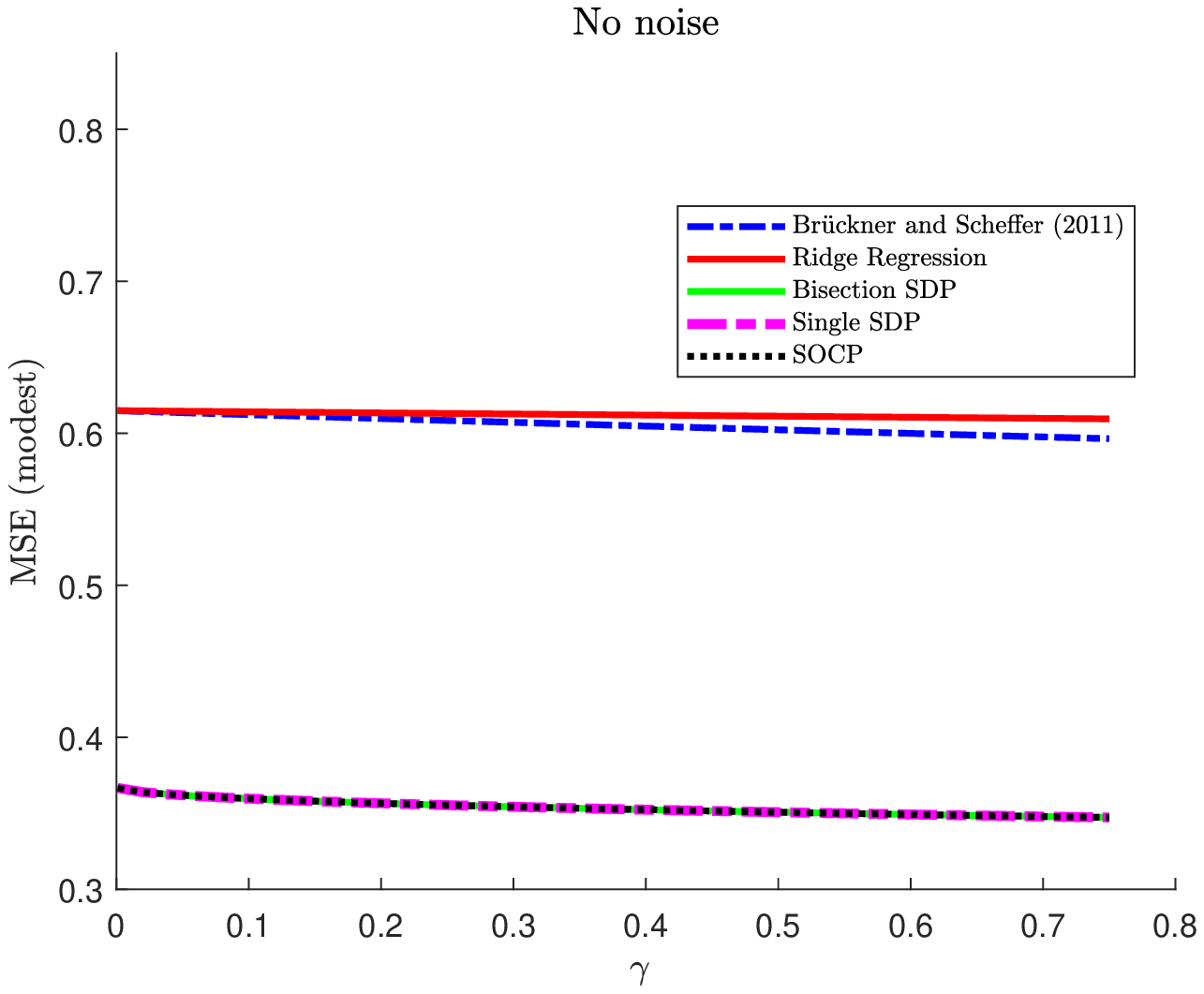}}
\subfigure{\includegraphics[width=0.32\textwidth]{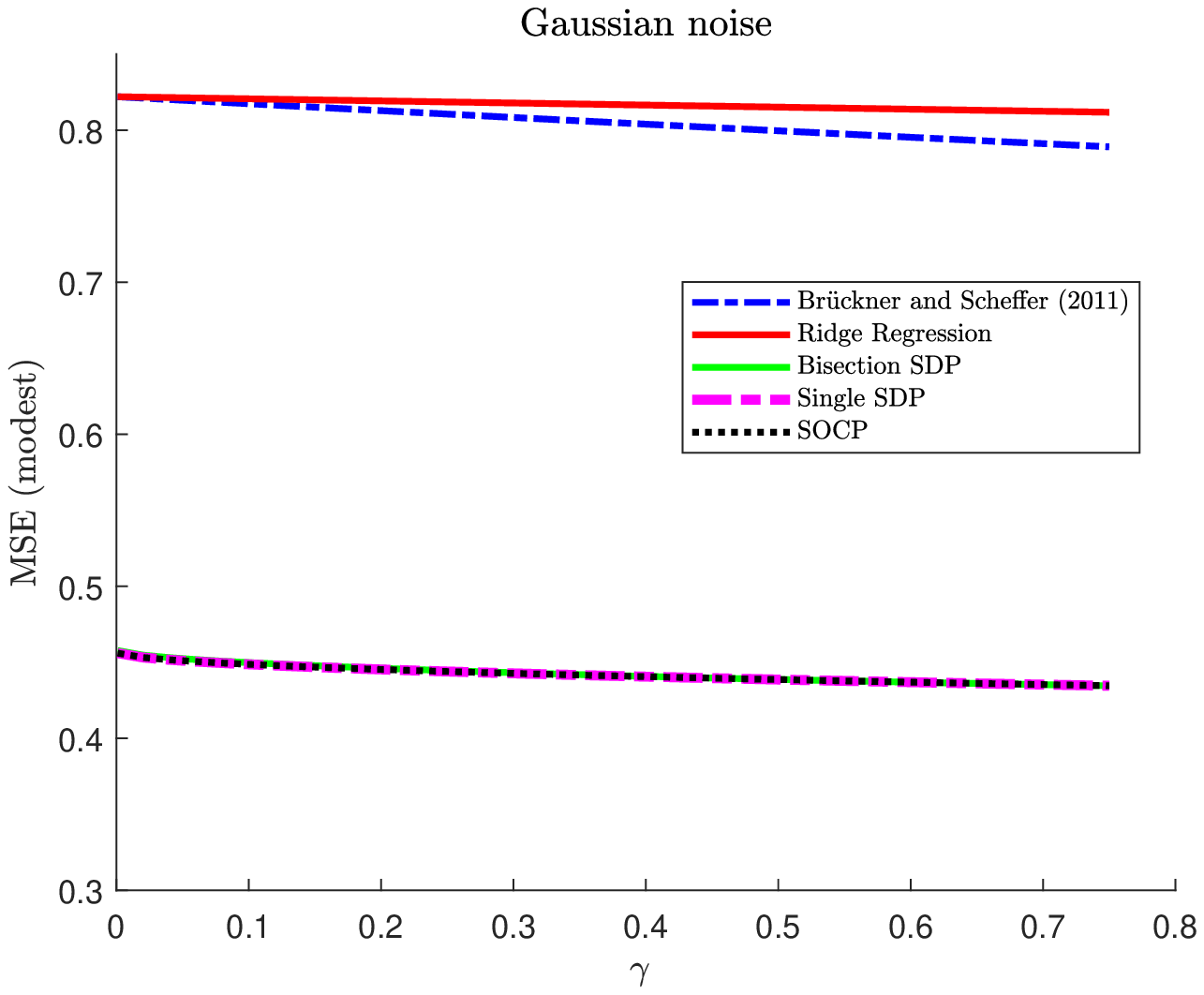}}
\subfigure{\includegraphics[width=0.32\textwidth]{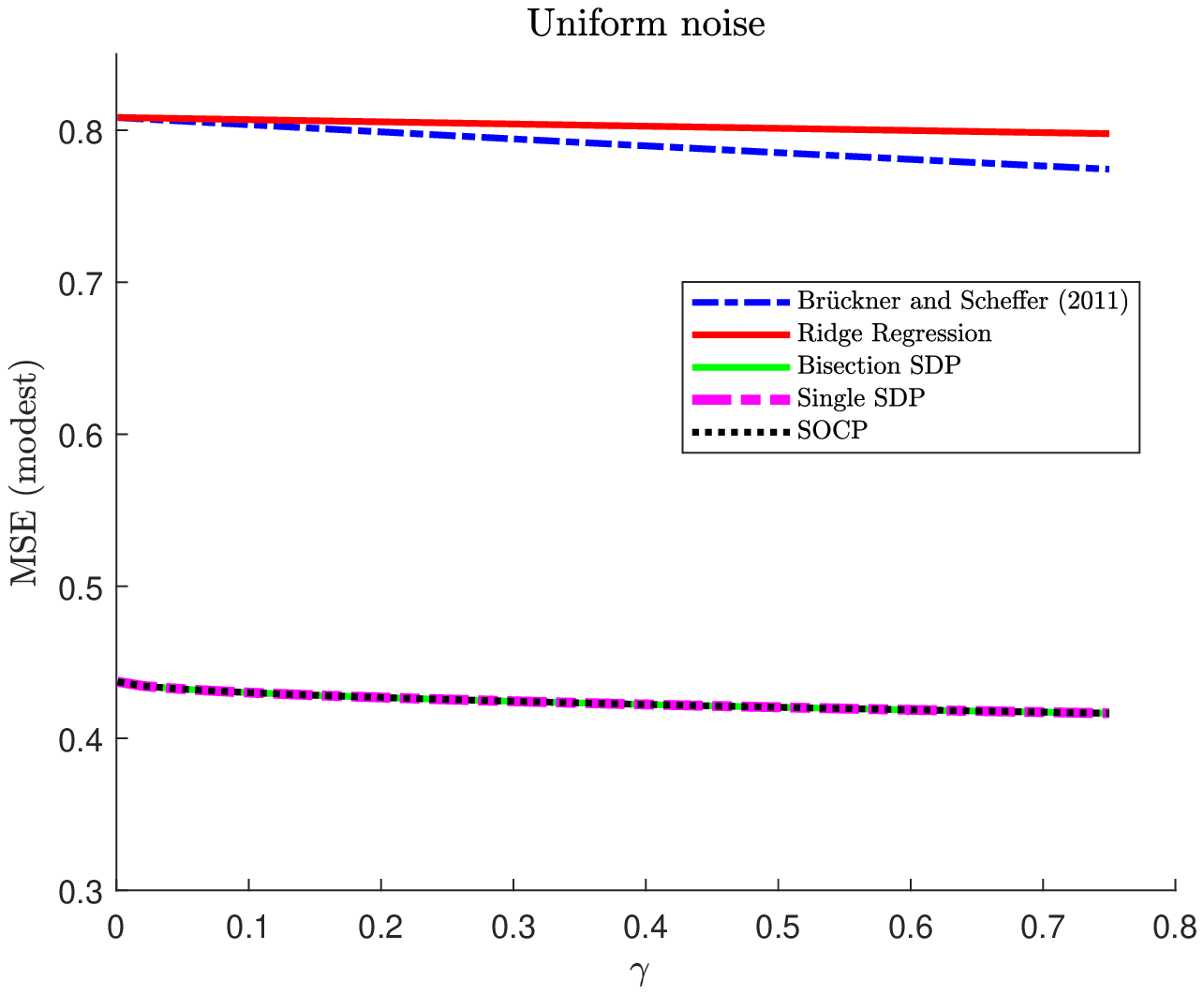}}
\vspace{-5mm}
\caption{MSE comparison for wine dataset generated by $\mathcal A_{\rm modest}$.}
\label{fig:noisewinemodest}
\vspace{-4mm}
\end{figure}

\begin{figure}[htbp]

\centering     
\subfigure{\includegraphics[width=0.32\textwidth]{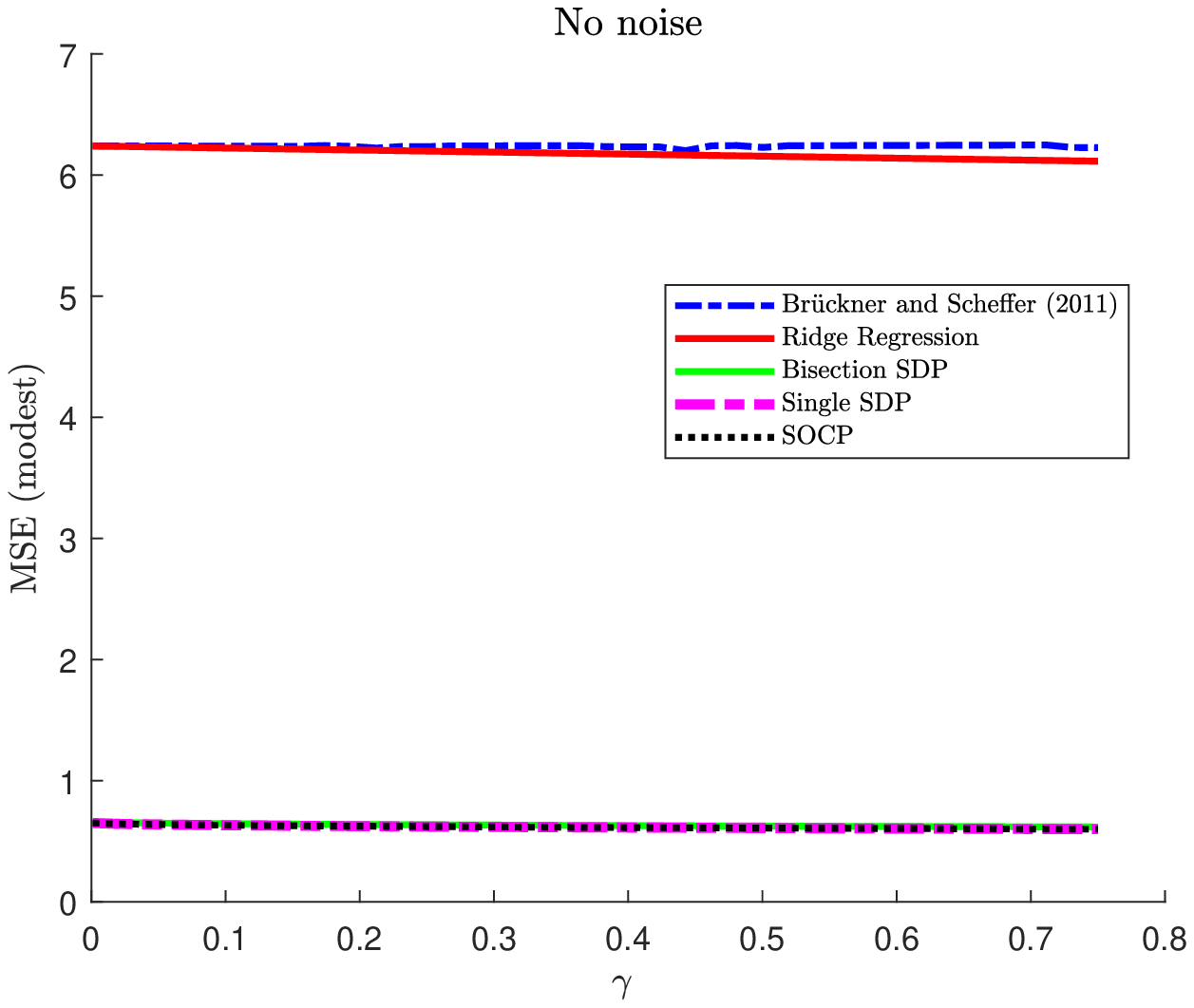}}
\subfigure{\includegraphics[width=0.32\textwidth]{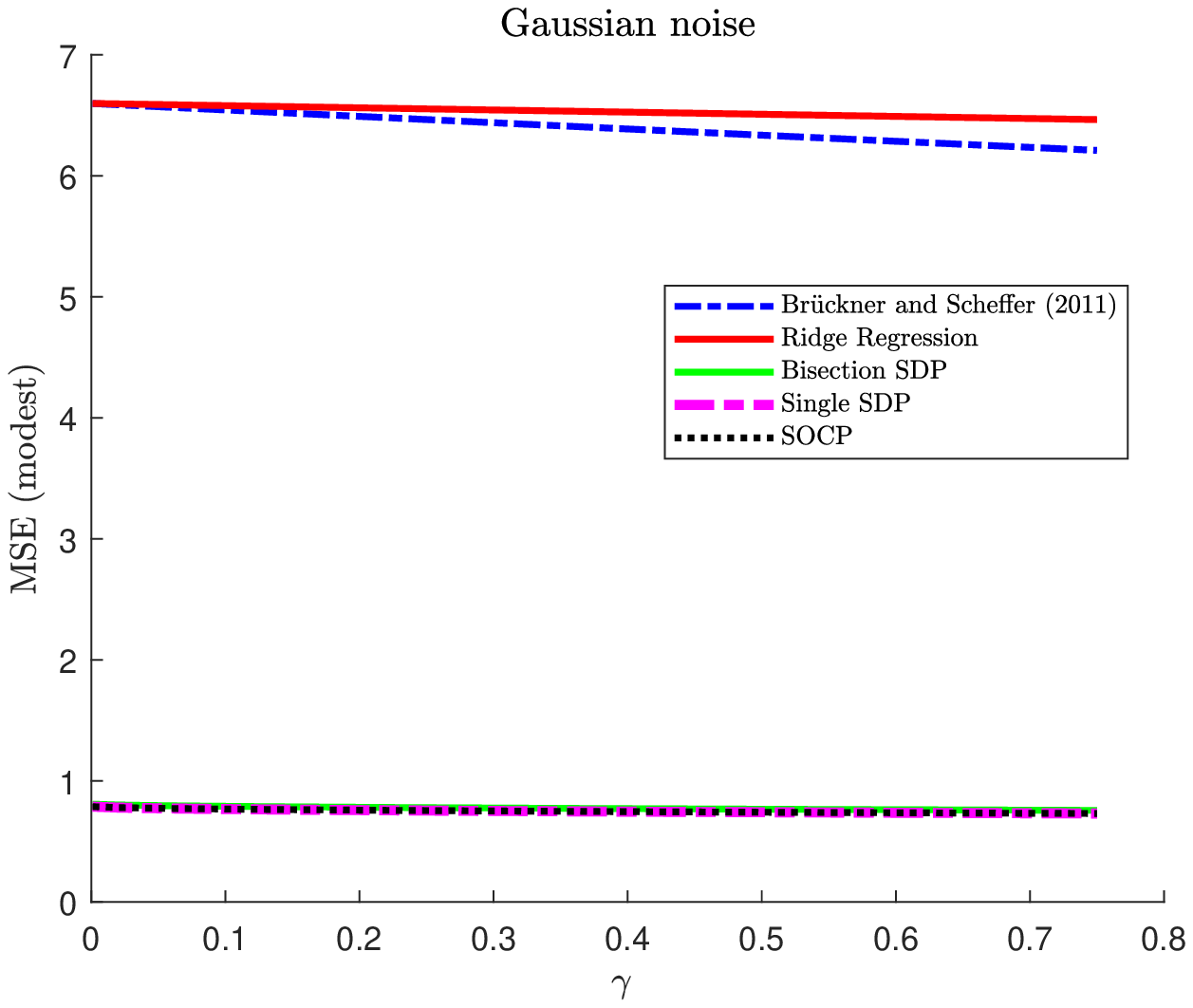}}
\subfigure{\includegraphics[width=0.32\textwidth]{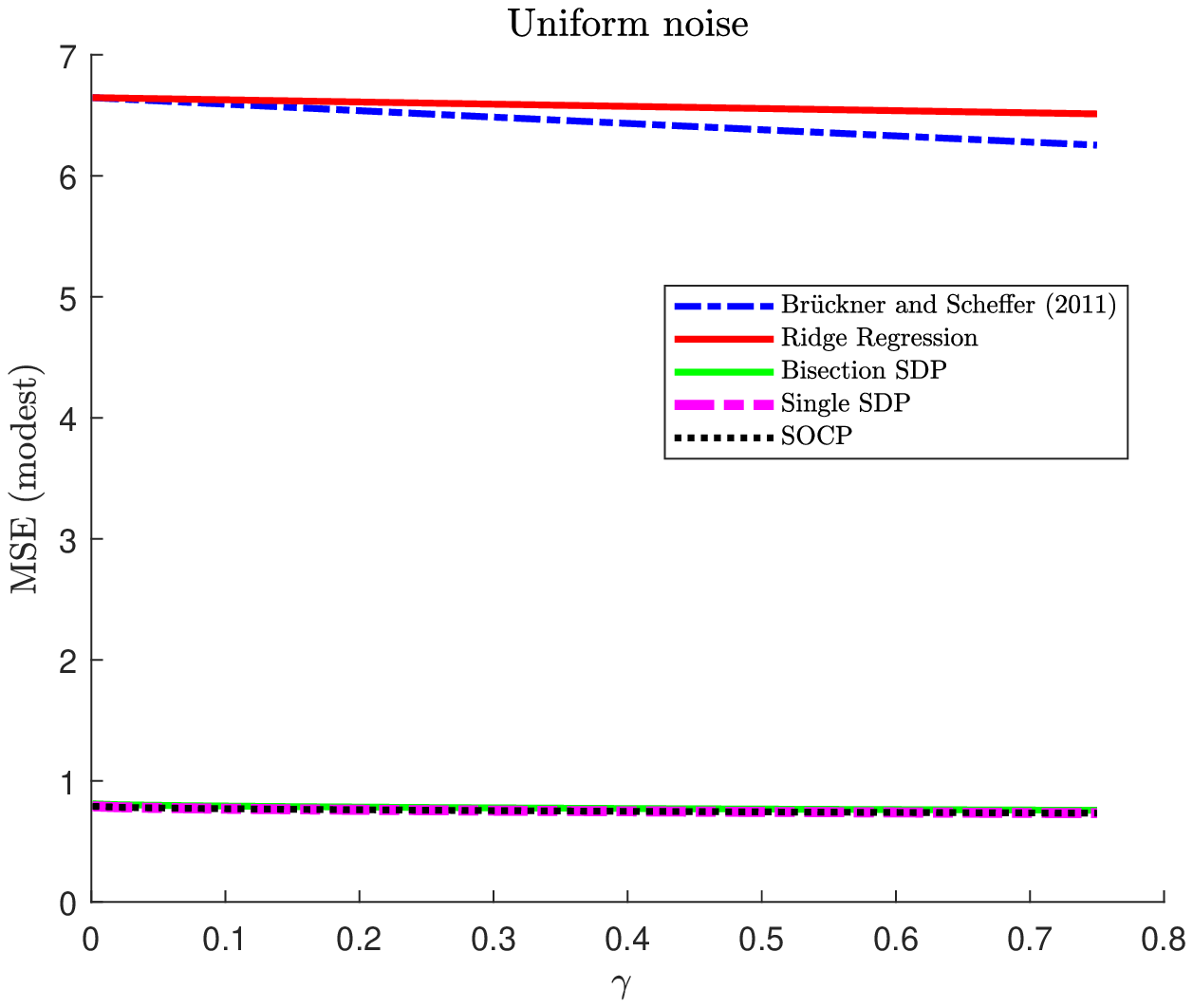}}
\vspace{-5mm}
\caption{MSE comparison for wine dataset generated by $\mathcal A_{\rm severe}$.}
\label{fig:noisewinesevere}
\vspace{-4mm}
\end{figure}

\end{document}